\providecommand{\U}[1]{\protect\rule{.1in}{.1in}}
\newtheorem{theorem}{Theorem}
\newtheorem{lemma}{Lemma}[section]
\newtheorem{corollary}[theorem]{Corollary}
\newtheorem{definition}[lemma]{Definition}
\newtheorem*{definition*}{Definition}
\newtheorem{proposition}[lemma]{Proposition}
\renewenvironment{proof}[1][Proof]{\noindent\textbf{#1.} }{\ \rule{0.5em}{0.5em}}
\newenvironment{acknowledgement}{\textbf{Acknowledgement.}}{}
\theoremstyle{definition}
\newtheorem*{remark}{Remark}
\newtheorem*{question}{Question}
\newcommand{\Sch}{\mathrm{Sch}}
\newcommand{\N}{\mathbb{N}}
\newcommand{\F}{\mathcal{F}}
\newcommand{\G}{\mathcal{G}}
\newcommand{\GoG}{\mathfrak{G}}
\newcommand{\acts}{\curvearrowright}
\begin{document}

\title{Invariant Schreier decorations of unimodular random networks}
\author{L\'{a}szl\'{o} M\'{a}rton T\'{o}th}
\date{}
\maketitle

\begin{abstract}
We prove that every $2d$-regular unimodular random network carries an invariant random Schreier decoration. Equivalently, it is the Schreier coset graph of an invariant random subgroup of the free group $F_d$. As a corollary we get that every $2d$-regular graphing is the local isomorphic image of a graphing coming from a p.m.p.\ action of $F_d$. 

The key ingredients of the analogous statement for finite graphs do not generalize verbatim to the measurable setting. We find a more subtle way of adapting these ingredients and prove measurable coloring theorems for graphings along the way.

\end{abstract}

\section{Introduction} \label{section:intro}

It is a nice exercise in combinatorics to show that any 2d-regular finite graph is a Schreier graph of the free group $F_d$ on $d$ generators. That is, the edges can be oriented and colored by $\{1,\ldots, d\}=[d]$ such that every vertex has exactly one incoming and outgoing edge of each color. We call such a coloring and orientation a \emph{Schreier decoration} of our graph. We do not want to spoil the argument for the reader, but will inevitably have to talk about parts of the solution. 
See \cite{cannizzo2013invariant} for a proof. 

The main result of this paper generalizes the above result to unimodular random graphs, a notion that arises naturally in the limit theory of sparse graph sequences. 

\begin{theorem}\label{theorem:main}
	Every $2d$-regular unimodular random rooted graph has an invariant random Schreier decoration.
\end{theorem}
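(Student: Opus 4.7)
The finite-graph argument factors through two classical steps. First, since $2d$ is even, Euler's theorem yields an \emph{Eulerian orientation} of $G$: each vertex becomes $d$-in and $d$-out. Second, build the bipartite "transition" (multi)graph $H$ on $V\sqcup V$ (an "out-copy" and an "in-copy" of each vertex), where each oriented edge $u\to v$ of $G$ contributes one edge from the out-copy of $u$ to the in-copy of $v$; then $H$ is $d$-regular bipartite, so K\H{o}nig's theorem produces a proper $d$-edge-coloring, which read back on the edges of $G$ is exactly a Schreier decoration. The plan is to promote each of these two steps to the unimodular random setting.

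For the first step, the goal is an invariant random orientation of the $2d$-regular unimodular random rooted graph that makes every vertex $d$-in and $d$-out. I would start from a crude invariant orientation (e.g.\ iid fair coin flips on edges), note that by unimodularity the expected imbalance at the root is zero, and then correct the remaining discrepancies by flipping along invariantly chosen augmenting directed paths pairing "in-excess" vertices with "out-excess" vertices. The bookkeeping that such a pairing can be performed invariantly (and in a way that drives the expected imbalance at the root to zero) is organised by the mass-transport principle; one should expect to take a weak limit of partially corrected orientations and verify via mass transport that the limit is genuinely Eulerian.

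For the second step, the bipartite transition graph $H$ inherits unimodularity, is $d$-regular and bipartite, and the target is an invariant random proper $d$-edge-coloring. I would proceed by induction on $d$: produce an invariant random perfect matching of $H$ (bipartite $d$-regular unimodular random graphs admit one, via Lyons--Nazarov / Hall-type measurable arguments), remove it to obtain a $(d-1)$-regular bipartite unimodular random graph, and iterate. After $d$ rounds, the $d$ matchings are the colour classes of the required decoration of~$G$.

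The main obstacle is honesty about measurability: naive versions of both ingredients fail in the Borel category (measurable K\H{o}nig can fail for Borel bipartite $\Delta$-regular graphings, and measurable Eulerian orientations need not exist), so each step must genuinely exploit the invariance/unimodularity, presumably via auxiliary iid labels and weak-limit extractions. Moreover the two steps are not independent: the orientation produced in step one must still permit an invariant matching structure in step two. I expect the technical heart to be controlling these dependencies simultaneously---most cleanly via a compactness argument in the space of invariant random \emph{partial} Schreier decorations of $(G,o)$, showing that the expected fraction of uncoloured edges at the root can be pushed to zero, and then using compactness of the space of invariant random decorated rooted graphs together with a mass-transport check to conclude that some weak limit is an honest invariant random Schreier decoration.
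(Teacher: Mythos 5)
Your high-level architecture --- balanced orientation, bipartite doubling, edge coloring, and a final weak-limit/compactness extraction with a mass-transport check --- is exactly the paper's, and your closing paragraph (push the uncolored fraction at the root to zero over a sequence of partial decorations, then take a subsequential weak limit) is precisely how the paper finishes. But both of the key lemmas feeding into that endgame are left unestablished, and the routes you propose for them have real problems.

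For the orientation, ``iid coin flips plus invariantly chosen augmenting paths organised by mass transport'' is a hope, not an argument. After iid flips almost every vertex has a nonzero (even) imbalance, and correcting it requires an invariant pairing or flow from out-excess to in-excess vertices; constructing such objects is exactly the kind of step that fails in the Borel category (Laczkovich's $2$-regular graphing with no measurable balanced orientation \cite{laczkovich1988closed}), and nothing in your sketch shows it succeeds when extra randomness is allowed. The paper avoids augmenting paths entirely: it defines a \emph{canonical} random balanced orientation by first orienting edge-disjoint cycles (each oriented cycle preserves balance, and the random greedy cycle-removal never references a root), leaving an even-degree forest, and then orienting each tree radially from a root with an explicit counting argument showing the resulting distribution on orientations is independent of the chosen root. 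That root-independence is the whole point --- it is what lets one build a representing graphing with a genuinely measurable balanced orientation (Theorem \ref{theorem:balanced_orientation}), and your proposal contains no substitute for it.

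For the coloring, peeling off $d$ invariant random perfect matchings ``via Lyons--Nazarov / Hall-type measurable arguments'' does not work as cited: Lyons--Nazarov produces factor-of-iid matchings under a nonamenability hypothesis and says nothing about, say, amenable bipartite regular unimodular random graphs, while ``Hall-type measurable arguments'' is precisely what fails (measurable K\H{o}nig fails for graphings, as you yourself note). The existence of an invariant random perfect matching in an arbitrary $d$-regular bipartite unimodular random graph is a statement of essentially the same depth as the theorem you are proving; it can be obtained by an almost-perfect-matching-plus-weak-limit argument, but that is the machinery you have not supplied, and iterating it $d$ times also requires tracking the joint distribution of the matchings already removed. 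The paper instead proves a quantitative measurable K\H{o}nig theorem (Theorem \ref{theorem:almost_proper_coloring}: $d+1$ colors with the last color of edge measure at most $\varepsilon$, built on Cs\'oka--Lippner--Pikhurko \cite{csoka2016kHonig}, with separate care for finite components where sparseness of high-degree vertices does not by itself give low density), and then performs a single weak limit, verifying invariance of the limit through the measure-preserving partial bijections defined by the color classes. So your plan is architecturally right but both load-bearing lemmas are missing.
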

\noindent 
We now introduce unimodularity and invariant random Schreier decorations. For the precise definitions omitted from the Introduction see Section \ref{section:preliminaries}.

Let $\GoG_{\circ}^{2d}$ denote the space of rooted, connected (possibly infinite) graphs with degrees bounded by $2d$. Similarly, let $\GoG_{\circ}^{\mathrm{Sch}}$ denote the space of rooted, connected Schreier graphs of $F_d$. There is a natural forgetting map $\Phi: \GoG_{\circ}^{\mathrm{Sch}} \to \GoG_{\circ}^{2d}$. 

We denote by $\mathcal{M}(X)$ the set of probability measures on a space $X$. A measure $\mu \in \mathcal{M}(\GoG_{\circ}^{2d})$ is called a \emph{unimodular random rooted graph} or \emph{unimodular random network} if it satisfies an involution invariance property mimicking reversibility of random walks. A \emph{random Schreier graph} is a measure $\mu' \in \mathcal{M}(\GoG_{\circ}^{\mathrm{Sch}})$. It is \emph{invariant}, if it is preserved by the natural action $F_d \acts \GoG_{\circ}^{\mathrm{Sch}}$, where generators $s \in [d]$ act by moving the root along the unique outward edge colored $s$ adjacent to it.  

\begin{definition} \label{definition:invariant_schreierization}
	Let $\mu \in \mathcal{M}(\GoG_{\circ}^{2d})$ be a unimodular random rooted graph that is $2d$-regular with probability 1. An invariant random Schreier decoration of $\mu$ is a measure $\mu' \in \mathcal{M}(\GoG_{\circ}^{\mathrm{Sch}})$ that is invariant and has $\Phi_{*} \mu' = \mu$. 
\end{definition}
\noindent
The condition $\Phi_{*} \mu' = \mu$ expresses that forgetting the orientation and coloring from a random Schreier graph generated by $\mu'$ gives the same random graph as $\mu$ in distribution.


\medskip

Cannizzo investigated the Schreier decorations of unimodular random rooted graphs in  \cite{cannizzo2013invariant}. He showed that for an invariant random Schreier graph $\mu'$ the pushforward $\Phi_{*} \mu'$ is always unimodular. Moreover, under some assumptions, if a unimodular random graph has an invariant random Schreier decoration, then it has uncountably many ergodic ones. Theorem \ref{theorem:main} fits in nicely with these results.


Invariant random Schreier graphs are closely related to Invariant Random Subgroups (IRS's) introduced by Abért, Glasner and Virág in \cite{abert2014kesten}. On one hand, the invariance property of the graph translates to conjugation invariance of the random subgroup that is the stabilizer of the root. On the other hand, given an IRS $H \leq F_d$, the Schreier graph on the coset space $F_d/H$ rooted at $H$ gives an invariant random Schreier graph. 

Keeping this correspondence in mind, Theorem \ref{theorem:main} can be thought of as a statement on IRS's of $F_d$. Bowen proved in \cite{bowen2012invariant} that $F_d$ has a large ``zoo'' of ergodic IRS's. Cannizzo's work shows that non-uni\-mod\-u\-lar\-i\-ty is a probabilistic obstruction to finding an IRS with a given factor geometry. Our result shows that it is the only obstruction. 

\medskip
The proof of the finite case relies on two parts. First, a finite $2d$-regular graph has a \emph{balanced} orientation. That is, an orientation with equal in- and outdegrees at all vertices. Second, a $d$-regular bipartite finite graph can be properly edge-colored with $d$ colors. This is Kőnig's line coloring theorem. Our approach to Theorem \ref{theorem:main} is to study these questions for \emph{graphings} instead of graphs.

Graphings are measurable, bounded degree graphs on some standard Borel probability space $(X,\nu)$ as vertex set with an additional measure preserving property. Their close connection to unimodular random graphs is also explained in Section \ref{section:preliminaries}. For now it suffices to know that any unimodular random graph $\mu$ can be encoded in a single graphing $\G$. We say $\G$ \emph{represents} $\mu$ if the connected component in $\G$ of a $\nu$-random vertex is the same as $\mu$ in distribution. Such a representing graphing exists for any unimodular random graph, though the choice is not unique.

Unfortunately neither of the two finite statements directly generalizes to graphings. In \cite{laczkovich1988closed} Laczkovich constructed a $2$-regular graphing that has no measurable balanced orientation. Moreover, there are $d$-regular bipartite graphings that have no measurable proper edge coloring with $d$ colors \cite[Section 6]{conley2013measurable}. We substitute these steps with the next two theorems.  

First we claim that representing graphings can be chosen smartly. 

\begin{theorem} \label{theorem:balanced_orientation}
	Let $\mu$ be a $2d$-regular unimodular random rooted graph. Then there exists a graphing $\G=(X,E,\nu)$ on a standard Borel space $(X, \nu)$ such that $\G$ represents $\mu$, and there is a measurable balanced orientation of the edges of $\G$.
\end{theorem}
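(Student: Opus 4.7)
Plan: My plan is to reduce Theorem \ref{theorem:balanced_orientation} to constructing a unimodular probability measure $\nu$ on the Polish space $\GoG_\circ^{\mathrm{or}}$ of rooted connected $2d$-regular graphs equipped with a balanced orientation. Let $\Psi: \GoG_\circ^{\mathrm{or}} \to \GoG_\circ^{2d}$ be the orientation-forgetting map; it suffices to produce $\nu$ with $\Psi_{*} \nu = \mu$ that is invariant under root-moves. For then the canonical graphing on $\GoG_\circ^{\mathrm{or}}$ under $\nu$ is a representing graphing of $\mu$, and the balanced orientation is by construction a Borel function of the vertex data, hence measurable.

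To construct $\nu$ I would combine two ingredients. First, for any finite $2d$-regular graph $G$ the (non-empty) finite set of balanced orientations is $\Aut(G)$-invariant, so the uniform measure on it provides a canonical equivariant lift; this handles the portion of $\mu$ concentrated on finite graphs. Second, for infinite $(G,o)$ I would approximate by balls: cap off the boundary of $B_n(o)$ with auxiliary edges to form a finite $2d$-regular graph $\tilde B_n$, sample a uniformly random balanced orientation of $\tilde B_n$, and retain only the orientations of edges interior to $B_n(o)$. Since ``balanced at $v$'' is a local condition and $\{0,1\}^{E(G)}$ is compact in the product topology, a subsequential weak-$*$ limit yields a probability measure on balanced orientations of $G$. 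Integrating against $\mu$ produces a candidate $\nu$ with $\Psi_{*}\nu = \mu$.

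The hard part will be verifying that $\nu$ is unimodular. Two distinct roots $o, o'$ of the same graph $G$ produce different ball sequences $B_n(o)$ and $B_n(o')$, but they agree outside a window of size $d(o,o')$, so heuristically the weak-$*$ limits should coincide in the interior; turning this into a rigorous invariance statement will require the mass-transport principle together with a careful symmetrization over the (essentially arbitrary) choice of capping. An alternative, more constructive route that fits well with the measurable-coloring tools the paper develops later is the following: start from any representing graphing $\G_0$ of $\mu$, pass to a Bernoulli extension $\G$ carrying IID $[0,1]$ vertex labels, and then apply a measurable K\H{o}nig-type edge-coloring or measurable $2$-factor decomposition theorem to split $E(\G)$ into $d$ measurable $2$-regular subgraphings, orienting each along its cycles and bi-infinite paths using the IID labels. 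The Laczkovich obstruction is avoided precisely because $\G$ is chosen together with the orientation rather than imposed in advance.
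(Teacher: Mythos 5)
Your overall reduction is the same as the paper's: construct an involution-invariant probability measure on the space of rooted, balanced-oriented (and vertex-labeled) graphs that pushes forward to $\mu$ under the forgetting map; the canonical graphing on that space then carries the orientation as part of its vertex data, so measurability is automatic and the Laczkovich obstruction is bypassed because the orientation lives in the fiber rather than being imposed on a pre-chosen graphing. (Minor point: you should also attach i.i.d.\ $[0,1]$ vertex labels to break symmetries, otherwise the canonical graphing need not represent $\mu$.) The entire content of the theorem, however, is the construction of this root-invariant random orientation, and both of your proposed constructions have genuine gaps there.

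In the first route, sampling a uniform balanced orientation of a capped ball $\widetilde{B}_n(o)$ and taking a subsequential weak-$*$ limit does not visibly yield a measure on orientations of $G$ that is well defined (independent of the subsequence and of the capping), measurable in $(G,o)$, and root-independent. Root-independence is a uniqueness-of-limit statement of Gibbs-measure type; the heuristic that the laws seen from $o$ and $o'$ ``agree in the interior'' is exactly what must be proved, and ``symmetrization over the capping'' plus the mass-transport principle is not an argument. The paper avoids this entirely by an \emph{explicit} random orientation whose law is root-independent by construction: cycles are oriented by a randomized greedy procedure that never refers to the root, the leftover undirected edges form an even-degree forest, and each tree is oriented radially from a root with uniform choices of in- and out-edges at each vertex; root-independence of the tree step is checked by an exact product formula over finite saturated subtrees that is manifestly symmetric in the vertices. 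In the second route, the ``measurable K\H{o}nig-type edge-coloring or measurable $2$-factor decomposition'' you invoke after passing to a Bernoulli extension is not an available theorem: measurable $2$-factorization and measurable orientation of the resulting $2$-regular pieces are precisely the statements obstructed in the measurable category, and it is not established that adding i.i.d.\ labels to a \emph{fixed} representing graphing removes the obstruction --- the paper's remark that the bi-infinite line admits no FIID balanced orientation indicates that i.i.d.\ labels alone are not the right mechanism. As written, neither route closes the gap; the missing ingredient is the explicit root-independent random orientation.
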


Second, we claim that the coloring can be done with little error. Coloring questions for Borel graphs and graphings have been extensively studied, see \cite{kechris2015descriptive} for a survey. We build on the work of Csóka, Lippner and Pikhurko in \cite{csoka2016kHonig} to derive a version of Kőnig's theorem that suits our purposes.

\begin{theorem}\label{theorem:almost_proper_coloring}
	Let $\G=(X_1,X_2,E, \nu)$ be a $d$-regular bipartite graphing. For every $\varepsilon > 0$ there is a measurable proper edge coloring of $\G$ with $d+1$ colors, such that the measure of edges of the $(d+1)^{\mathrm{th}}$ color is at most $\varepsilon$.
\end{theorem}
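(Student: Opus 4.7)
The plan is to reduce to the measurable edge-coloring theorem of Csóka–Lippner–Pikhurko \cite{csoka2016kHonig}, which yields a proper measurable $(d+1)$-edge coloring of any bipartite graphing of maximum degree $d$. From such a coloring one has only the pigeonhole bound $\nu(c^{-1}(d+1)) \leq \nu(E)/(d+1)$, which is far from $\varepsilon$. So the core task is to squeeze the $(d+1)^{\mathrm{th}}$ color class by local recolorings until its measure drops below $\varepsilon$.

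Fix such a coloring $c_0 : E \to \{1,\dots,d+1\}$ and let $F_0 := c_0^{-1}(d+1)$, a measurable matching. Since $\G$ is $d$-regular, for every edge $e = (x,y) \in F_0$ each endpoint is missing exactly one color from $\{1,\dots,d\}$; denote these $m(x)$ and $m(y)$. Where $m(x)=m(y)$, I recolor $e$ with this common color; since $F_0$ is a matching these simultaneous recolorings are measurable and conflict-free, and remove a positive-measure chunk of $F_0$. Where $m(x)=i\neq j=m(y)$, I would perform a Kempe swap on the $(i,j)$-alternating chain starting at $y$. Crucially, bipartiteness guarantees this chain is a simple path that never reaches $x$ — reaching $x$ would force $x$ to have an $i$-edge, contradicting $m(x)=i$ — so after the swap $m(y)$ becomes $i$ and $e$ can then be recolored with $i$ and removed from $F_0$.

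The key technical point is performing Kempe swaps measurably and simultaneously on a positive fraction of $F_0$. Two $F_0$-edges' chains can share vertices, so parallel swaps can conflict. My plan is to build a measurable conflict graph $H$ on $F_0$-edges, joining two whenever their Kempe chains intersect, and then extract a positive-measure Borel independent set $I \subseteq F_0$ in $H$ via known Borel independent-set / coloring bounds. Swapping along the chains associated to $I$ produces a new proper $(d+1)$-coloring with $\nu(F_1) \leq (1-\rho)\,\nu(F_0)$ for some constant $\rho>0$. Iterating and passing to a measurable limit coloring drives the $(d+1)^{\mathrm{th}}$ color-class measure below $\varepsilon$.

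The main obstacle is the control of Kempe chains, which can be long or bi-infinite: to conclude that $H$ has bounded Borel chromatic number — so that a positive-density independent set of $F_0$-edges exists — one needs to exploit the bipartite structure together with the measure-preserving property of $\G$ (a unimodularity-style mass transport argument on chain lengths). This is presumably where the paper adapts the CLP techniques nontrivially. An alternative route I would pursue in parallel is to iterate a measurable Hall-type matching theorem directly on the shrinking bipartite graphings $\G, \G\setminus M_1, \dots$, producing $d$ measurable near-perfect matchings and then rearranging the small residue into a single measurable matching of measure $\leq \varepsilon$; this seems to shift the same difficulty into a rearrangement step of Vizing-like flavor, but may be technically cleaner to execute.
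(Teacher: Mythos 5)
Your starting point is sound: CLP's Theorem~1.9 applied to a $d$-regular bipartite graphing (which has no degree-$(d+1)$ vertices, so the sparsity hypothesis is vacuous) does give a measurable proper $(d+1)$-edge-coloring, the direct recoloring when $m(x)=m(y)$ is fine, and the bipartite parity argument showing an $(i,j)$-chain from $y$ never reaches $x$ is the standard Kőnig argument. But the claimed geometric decay $\nu(F_1)\le(1-\rho)\nu(F_0)$ is exactly the unproven crux, and it does not follow from what you wrote. A mass-transport argument does show the chains are almost surely finite (the union of two color classes is a degree-$\le 2$ graphing, and a positive measure of one-ended rays is impossible because each endpoint would receive infinite transported mass), so ``bi-infinite'' chains are not the real enemy; the problem is that the finite chains admit no uniform length bound. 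Only the $F_0$-edges whose chains have length at most some fixed $L$ give a bounded-degree conflict graph to which Kechris--Solecki--Todorcevic applies, and the measure of that set can be an arbitrarily small fraction of $\nu(F_0)$ --- a fraction which, moreover, is not controlled as the coloring is updated from round to round. So the iteration is not shown to make progress at a uniform rate, and no limiting argument is available. This is not a technicality: uncontrolled augmenting chains are precisely why measurable Kőnig's theorem is false (the Conley--Kechris examples cited in the paper), and taming them is the entire content of the Csóka--Lippner--Pikhurko machinery you would be re-deriving.

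The paper avoids unbounded augmentations altogether and runs the induction in the opposite direction. It repeatedly applies CLP's inductive step (Theorem~4.2 here): peel off one measurable matching at a time, with the guarantee that the vertices of maximal degree in the remainder are $r$-sparse. The quantity propagated through the induction is \emph{sparsity} of the bad set, not decay of its measure; sparsity is converted into small measure by a parity/divisibility argument showing that a finite component of the intermediate graphings cannot contain exactly one high-degree vertex, so high-degree vertices have density $O(1/r)$ even inside finite components, which are then colored by finite Kőnig. Recolorings appear only at the final stage (degrees $2$ and $3$), and only along alternating paths of bounded length $\le r$ between \emph{nearby} bad edges --- boundedness is what keeps the parallel swaps Borel. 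Your ``alternative route'' via iterated matchings is in spirit the paper's route, but without the sparsity control on each matching's residue the leftover edges cannot be packed into a single color class of measure $\le\varepsilon$; that residue control is where the work lies.
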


\noindent
Combining Theorems \ref{theorem:balanced_orientation} and \ref{theorem:almost_proper_coloring} with a subsequential weak limit argument gives the proof of Theorem \ref{theorem:main}.  

\medskip

We can formulate corollaries of both Theorems \ref{theorem:main} and \ref{theorem:almost_proper_coloring} using local isomorphisms. We say a graphing $\G_1=(X_1, E_1, \nu_1)$ is the \emph{local isomorphic image} of another graphing $\G_2=(X_2, E_2, \nu_2)$ if their is a measure preserving map $\varphi: X_2 \to X_1$ that is a rooted isomorphism restricted to almost all the connected components of $\G_2$. 
It is clear that in this case $\G_1$ and $\G_2$ represent the same unimodular random graph.

\begin{corollary} \label{corollary:graphing_lift}
	Every $2d$-regular graphing $\G$ is the local isomorphic image of some graphing $\G'$ that comes from a p.m.p. action of $F_d$. That is, the edges of $\G'$ have a measurable balanced orientation and coloring with $[d]$.
\end{corollary}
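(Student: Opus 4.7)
The plan is to apply Theorem \ref{theorem:main} to the unimodular random graph represented by $\G$ and then realize the resulting invariant random Schreier decoration as a graphing $\G'$ sitting above $\G$.

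Let $c\colon X \to \GoG_\circ^{2d}$ be the component map sending $x$ to the isomorphism class of its connected component rooted at $x$, so $\mu := c_{*}\nu$ is the unimodular random graph represented by $\G$. By Theorem \ref{theorem:main} there is an invariant $\mu' \in \mathcal{M}(\GoG_\circ^{\mathrm{Sch}})$ with $\Phi_{*}\mu' = \mu$; disintegrate $\mu' = \int \mu'_H \, d\mu(H)$ over $\Phi$, so $\mu'_H$ is a distribution on Schreier decorations of $H$. I would then build an extension $(X', \nu') \to (X, \nu)$ whose points are pairs $(x, D)$ with $D$ a Schreier decoration of the connected component of $x$ in $\G$, rooted at $x$, and where the conditional law of $D$ given $x$ is $\mu'_{c(x)}$ transported to this concrete component via a root-preserving identification. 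The projection $\pi\colon X' \to X$ is measure preserving by construction.

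Next I would define a graphing $\G'$ on $(X', \nu')$ whose edges are the $\pi$-lifts of the edges of $\G$, each carrying the orientation and $[d]$-color given by $D$. The Schreier property makes the $d$ generators of $F_d$ act on $X'$ by Borel bijections, and $F_d$-invariance of $\mu'$ makes these bijections measure preserving, so $\G'$ comes from a p.m.p.\ action of $F_d$ and carries a measurable balanced oriented $[d]$-coloring. On each connected component of $\G'$ the decoration $D$ is determined by the starting point, so $\pi$ restricts there to a rooted graph isomorphism onto the corresponding component of $\G$; hence $\pi$ is a local isomorphism and $\G$ is the local isomorphic image of $\G'$.

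The main technical difficulty is the measurable setup of the extension: one needs $H \mapsto \mu'_H$ to be a Borel disintegration (afforded by the standard Borel structure on $\GoG_\circ^{2d}$ and $\GoG_\circ^{\mathrm{Sch}}$), and one must choose the root-preserving identification between the component of $x$ and a representative of $c(x)$ measurably. In the presence of nontrivial rooted automorphisms this calls for a factor-of-IID symmetry-breaking device such as i.i.d.\ $[0,1]$-labels on vertices, after which the identification becomes canonical on a conull set.
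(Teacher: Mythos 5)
Your first step coincides with the paper's: apply Theorem \ref{theorem:main} to $\mu_{\G}$ to get an invariant random Schreier decoration $\mu'$. The divergence is in how the decoration is realized as a graphing over $\G$, and this is where your sketch has a genuine gap. The paper does \emph{not} build the extension directly over $\G$; it first constructs a separate graphing on the space $\GoG_{\circ}^{\Sch,l}$ of vertex-labeled Schreier graphs (where the graphing property follows directly from the invariance of $\mu'$, exactly as in Proposition \ref{proposition:orientable_graphing}), observes that this graphing is locally equivalent to $\G$, and then invokes Theorem \ref{theorem:bilocal} (local equivalence implies bi-local isomorphism) to obtain a common extension $\G'$ mapping onto both, pulling the decoration back along one of the two local isomorphisms. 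Your proposal instead tries to build $\G'$ as a fiber product $(x,D)$ over $X$ with conditional law $\kappa_x = (\psi_x^{-1})_{*}\mu'_{c(x)}$, which amounts to reproving the relevant case of Theorem \ref{theorem:bilocal} by hand.

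The concrete missing step is the verification that $\nu'$ satisfies the graphing identity (\ref{eqn:graphing}) for the lifted edge set, equivalently that the generator maps on $X'$ are measure preserving. Unwinding the mass-transport computation, this requires that for almost every edge $(x,y)$ of $\G$ the two transported conditional laws $\kappa_x$ and $\kappa_y$ agree as measures on decorations of the common component; that is, the disintegration $H \mapsto \mu'_H$ of $\mu'$ over $\Phi$ must be \emph{root-independent}. This does not follow in one line from the $F_d$-invariance of $\mu'$: the generator $s$ moves the root to a neighbor that depends on the decoration, so $s$ does not act fiberwise over $\Phi$ and $s_{*}\mu'_H$ is spread over many fibers; extracting the needed consistency (together with the measurable choice of the identifications $\psi_x$ in the presence of rooted automorphisms, which your i.i.d.-labeling device addresses only partially, since relabeling changes $c(x)$ and hence the fiber you disintegrate over) is precisely the nontrivial content of Theorem \ref{theorem:bilocal}. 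Either carry out that argument in full, or do as the paper does and quote Theorem \ref{theorem:bilocal} after constructing the labeled Schreier graphing from $\mu'$.
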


\noindent
Note that Corollary \ref{corollary:graphing_lift} implies Theorem \ref{theorem:balanced_orientation} immediately. 

\begin{corollary} \label{corollary:proper_coloring_lift}
	Let $\G=(X_1,X_2,E)$ be a bipartite graphing with maximum degree $d$. Then $\G$ is the local isomorphic image of a bipartite graphing that can be measurably edge-colored with $d$ colors. 
\end{corollary}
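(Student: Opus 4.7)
The plan is to iterate Theorem \ref{theorem:almost_proper_coloring} with $\varepsilon_n \to 0$, extract a subsequential weak limit, and realize that limit as a graphing $\G'$ covering $\G$ via a coupling argument; this parallels the route sketched in the introduction from Theorems \ref{theorem:balanced_orientation} and \ref{theorem:almost_proper_coloring} to Theorem \ref{theorem:main}. First I would verify that Theorem \ref{theorem:almost_proper_coloring} holds for bipartite graphings of maximum degree $d$, by either adapting the Csóka--Lippner--Pikhurko style argument directly or by adding a dummy matching to reduce to the $d$-regular case in a bipartiteness-preserving way. Applied with $\varepsilon_n = 1/n$, this yields proper edge colorings $c_n \colon E \to [d+1]$ of $\G$ whose color-$(d+1)$ class has $\nu$-measure at most $1/n$.

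Next, pushing $\nu$ forward under $x \mapsto (\G, x, c_n)$ gives a sequence of unimodular random rooted, bipartite, $[d+1]$-edge-colored graphs $\mu_n$ in the compact Benjamini--Schramm space $\GoG_\circ^{\mathrm{bip},[d+1]}$. A subsequential weak limit $\mu^*$ is again unimodular, and since the event that the root is incident to a color-$(d+1)$ edge is clopen with $\mu_n$-probability at most $d/n$, the limit $\mu^*$ is supported on properly $[d]$-edge-colored rooted graphs. This already produces an invariant random proper $d$-edge-coloring of the unimodular random graph represented by $\G$; to promote it to a local-isomorphic cover of $\G$ itself, I would take weak limits at the level of \emph{couplings}, viewing $x \mapsto (x, (\G, x, c_n))$ as a probability measure $\pi_n$ on the standard Borel space $X \times \GoG_\circ^{\mathrm{bip},[d+1]}$ with first marginal $\nu$, and extracting a weak limit $\pi^*$ along the same subsequence.

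Disintegrating $\pi^*$ over $\nu$ assigns, to $\nu$-a.e.\ $x$, a probability measure supported on proper $d$-edge-colorings of the component of $x$ in $\G$. A Borel measurable selector onto an auxiliary probability space $([0,1], \lambda)$ converts this disintegration into a graphing $\G'$ on $X \times [0,1]$ whose edges are the measurable lift of $\G$ and whose edge-coloring is a measurable proper $d$-coloring; the coordinate projection $X \times [0,1] \to X$ is the sought-after local isomorphism.

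The main obstacle is this coupling step: a weak limit of couplings need not preserve the property that the second coordinate is a deterministic function of the first, so the disintegrated measure is genuinely random, and one must use unimodularity of $\mu^*$ together with the fact that $\pi^*$ still has first marginal $\nu$ to ensure (i) the disintegrated measures are supported on colorings of the actual $\G$-components rather than arbitrary isomorphic copies, and (ii) the resulting object on $X \times [0,1]$ is a genuine graphing whose projection restricts to rooted graph isomorphisms on a.e.\ connected component rather than a nontrivial covering.
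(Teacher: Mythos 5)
The first half of your argument --- regularizing $\G$ in a bipartiteness-preserving way, applying Theorem \ref{theorem:almost_proper_coloring} with $\varepsilon_n = 1/n$, and extracting a subsequential weak limit supported on properly $[d]$-edge-colored rooted graphs, with the bipartition retained as a vertex label --- is exactly the paper's route. The divergence, and the genuine gap, is in how you pass from that unimodular random decoration of $\mu_{\G}$ back to a graphing that has $\G$ as a local isomorphic image.

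Your coupling step does not go through as written, and you have in fact named the precise reason yourself: the set $\bigl\{(x,(G,o,c)) : (G,o) \cong (C_{\G}(x),x)\bigr\}$ is Borel but not closed in $X \times \GoG_{\circ}^{\mathrm{bip},[d+1]}$ (the component map $x \mapsto (C_{\G}(x),x)$ is Borel, not continuous for any compatible Polish topology on $X$), so the weak limit $\pi^*$ of the couplings $\pi_n$ may charge its complement. Neither the fixed first marginal $\nu$ nor unimodularity of $\mu^*$ forces the disintegrated measures to be supported on colorings of the \emph{actual} $\G$-components, nor the projection $X\times[0,1]\to X$ to restrict to component isomorphisms rather than a nontrivial covering; your items (i) and (ii) are the entire difficulty, not loose ends. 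Closing this gap is precisely the content of a nontrivial theorem the paper invokes instead of reproving: local equivalence of graphings implies bi-local isomorphism (Theorem \ref{theorem:bilocal}, i.e.\ \cite[Theorem 18.59]{lovasz2012large}), packaged for decorations as Theorem \ref{theorem:translation}. Concretely, the paper realizes the limiting unimodular random colored graph as a graphing $\G_0$ on a space of vertex-labeled decorated rooted graphs (as in Subsection \ref{subsection:constructing_graphings}), notes that $\G_0$ and $\G$ are locally equivalent, obtains from Theorem \ref{theorem:bilocal} a common graphing $\G'$ mapping locally isomorphically onto both, and pulls the coloring back to $\G'$. You should either cite Theorem \ref{theorem:translation} at this point or accept that carrying out your disintegration honestly amounts to reproving Theorem \ref{theorem:bilocal}.
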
 

\noindent
Both corollaries are instances of a more general correspondence between coloring results for unimodular random networks and graphings. We express this phenomenon in Theorem \ref{theorem:translation} in Section \ref{section:graphings}.

\begin{remark}
	For graphings being bipartite is a stronger assumption than containing no odd cycles. This distinction is not relevant for our results, Theorem \ref{theorem:almost_proper_coloring} and Corollary \ref{corollary:proper_coloring_lift} hold with the weaker assumption as well. The reason for stating our results this way is that the relevant graphings arising in this paper are automatically bipartite. 
\end{remark}

It is natural to ask if any of these random decorations can be constructed as factor of i.i.d.\ (FIID) processes. See for example \cite{csoka2017invariant} for the definition of FIID for deterministic graphs, the definition for random graphs is analogous. It is not hard to see that the bi-infinite line has no FIID balanced orientation.

\begin{question}
	Which $2d$-regular unimodular random networks have a factor of i.i.d.\ Schreier decoration? Which even degree unimodular random networks have a factor of i.i.d.\ balanced orientations?
\end{question}

The structure of the paper is as follows. In Section \ref{section:preliminaries} we give a detailed introduction to unimodularity, graphings, and explain the connection to Benjamini-Schramm convergence. We also study the finite statement to motivate our question. We consider orientations in Section \ref{section:orientation} and prove Theorem \ref{theorem:balanced_orientation}. In Section \ref{section:coloring} we turn to coloring the edges and prove Theorem \ref{theorem:almost_proper_coloring}. We put these together and prove Theorem \ref{theorem:main} in Section \ref{section:proof_of_main}. In Section \ref{section:graphings} we prove Corollaries \ref{corollary:graphing_lift} and \ref{corollary:proper_coloring_lift}.

\medskip

\begin{acknowledgement}
The author would like to thank Miklós Abért, Damien Gaboriau, Ferenc Bencs and Nóra Szőke for inspiring conversations about the topic. 
\end{acknowledgement}

\section{Graphings, unimodularity and local convergence} \label{section:preliminaries}

In this section we will introduce the main notions of the paper and provide background and motivation for Theorem \ref{theorem:main}. A thorough and beautifully written exposition of the notions introduced here can be found in \cite{lovasz2012large}. 

Throughout this paper we will use standard letters $F,G, \ldots$ to denote countable graphs, calligraphic letters $\G, \mathcal{H}, \ldots$ to denote graphings and $\GoG$ with certain indices to denote spaces of graphs. The measures $\mu, \mu', \widetilde{\mu}, \ldots$ will denote random graphs, while $\nu, \nu', \widetilde{\nu}, \ldots$ will denote measures on vertex or edge sets of graphings. To avoid confusion, decorations of edges will be \emph{colors} and decorations of vertices will be \emph{labels}. 

The map $\Phi$ will always forget all the decoration of a graph. We do not burden the notation with indicating what type of decoration we are forgetting, this will always be clear from context. We consider Schreier graphs to be rooted and connected, unless explicitly stated otherwise. Properties of random graphs are always understood to hold with probability 1.

\subsection{Local convergence}  

Recall that $\GoG_{\circ}^{2d}$ denotes the space of rooted, connected graphs with degrees bounded by $2d$. For two such graphs $(G_1, o_1), (G_2, o_2) \in \GoG_{\circ}^{2d}$ we define their distance as
\[d\big((G_1, o_1), (G_2, o_2)\big) = \frac{1}{2^r},\] 
where $r$ is the largest integer with $B_{G_1}(r,o_1) \cong B_{G_2}(r,o_2)$. $B_G(r,o)$ denotes the rooted ball in $G$ around $o$ of radius $r$, the isomorphism is understood as rooted graphs. Two rooted graphs are close, if they look the same in a large neighborhood of the root. If the balls above are isomorphic for all $r \in \N$, then $(G_1,o_1)$ and $(G_2,o_2)$ are isomorphic, and their distance is defined 0. With this distance $\GoG_{\circ}^{2d}$ becomes a totally disconnected compact metric space.

Local (or Benjamini-Schramm \cite{benjamini2001}) convergence takes a connected finite graph $F$ and by choosing a uniform random root among its vertices turns it into the random rooted graph $(F,o)$. If the graph is not connected, only the component of the random root is kept. This way we obtain a probability measure $\mu_F$ on $\GoG_{\circ}^{2d}$. Formally we define the map $f:V(F) \to \GoG_{\circ}^{2d}$, $f(v) = \big(C_F(v),v\big)$, where $C_F(v)$ denotes the component of $F$ containing $v$. Then $\mu_F$ is simply the pushforward of the uniform measure on $V$ onto $\GoG_{\circ}^{2d}$ by $f$.  

A sequence of graphs $(F_n)$ is convergent, if the corresponding $\mu_{F_n}$ converge weakly in the space $\mathcal{M}(\GoG_{\circ}^{2d})$ of probability measures on $\GoG_{\circ}^{2d}$. In this case the weak limit $\mu$ is considered to be the limit of the finite graphs $F_n$. Since $\GoG_{\circ}^{2d}$ is totally disconnected, weak convergence is equivalent to the convergence of the measures of the following clopen sets. For a finite rooted graph $(F,v) \in \GoG_{\circ}^{2d}$ of radius $r$ we define the clopen set

\[\GoG_{(F,v)}=\big\{(G,o) \in \GoG_{\circ}^{2d} ~|~ B_{G}(r,o) \cong (F,v)\big\}.\]
A sequence of measures $\mu_n \in \mathcal{M}(\GoG_{\circ}^{2d})$ converges weakly to $\mu$ if and only if $\mu_n(\GoG_{(F,v)}) \to \mu(\GoG_{(F,v)})$ for all choices of $(F,v)$. 

While the $\mu_{F_n}$ are supported on finite rooted graphs, their limit $\mu$ can be supported on infinite graphs as well. 

\subsection{Sofic and unimodular measures}

A central question of this limit theory is to determine which measures are limits of finite graphs. This leads to the notions of \emph{sofic} and \emph{unimodular} measures. 

Soficity is easy to define: a measure $\mu \in \mathcal{M}(\GoG_{\circ}^{2d})$ is sofic if it is the limit of measures $\mu_{F_n}$ corresponding to finite graphs. 

Unimodularity is more subtle, and we will only define it in the regular case. See for example \cite{lovasz2012large} for a general treatment. Let $\mu \in \mathcal{M}(\GoG_{\circ}^{2d})$ be $2d$-regular with probability 1. It is unimodular if it satisfies the following involution invariance property. We generate a random instance $(G,o)$ of $\mu$. We then choose a uniform random neighbor $o'$ of $o$ in $G$. We say that $\mu$ is unimodular, if the random birooted graph $(G,o,o')$ is invariant with respect to inverting the order of the roots.

Formally we introduce the space $\GoG_{\circ\circ}^{2d}$ of connected graphs with degrees bounded by $2d$, that have two distinguished vertices, the first and the second root. The topology is defined similarly to $\GoG_{\circ}^{2d}$, two such birooted graphs are close, if the look the same in large neighborhoods around both roots. There is a natural involution $\iota: \GoG_{\circ\circ}^{2d} \to \GoG_{\circ\circ}^{2d}$ swapping the roots: $\iota (G,o,o') = (G,o',o)$. The random rooted graph $\mu \in \mathcal{M}(\GoG_{\circ}^{2d})$ determines a measure $\widetilde{\mu} \in \mathcal{M}(\GoG_{\circ\circ}^{2d})$ as above, by choosing the second root uniformly among the neighbors of the original. We say $\mu$ is unimodular if $\iota_{*}\widetilde{\mu}=\widetilde{\mu}$. 

We mention two important facts about unimodularity. First, measures $\mu_F$ coming from finite graphs are unimodular. This is easy to check, it is just a reformulation of the simple random walk being not only stationary, but also reversible with respect to the uniform measure on the vertices of a finite regular graph.

Second, unimodularity is a closed condition in $\mathcal{M}(\GoG_{\circ}^{2d})$ with respect to weak convergence. The reason is that involution invariance can be checked using only the local neighborhood probabilities $\mu(\GoG_{(F,v)})$.

These two facts imply that every sofic measure is unimodular. The converse however is an open question, it is not known whether there is a non-sofic unimodular measure. 

\subsection{Invariance in the finite case}

To motivate Theorem \ref{theorem:main} we explore how the invariance property appears in the finite case. 


Our finite statement takes a connected finite graph $F$ and turns it into an unrooted Schreier graph $\widetilde{F}=(F, \mathrm{or}, c)$, where $\mathrm{or}$ and $c$ signify the orientation and coloring respectively. If we then choose a uniform random root, we get the random rooted Schreier graph $(F, o, \mathrm{or}, c)$, where $o \in V(F)$ is uniform random. We denote its distribution by $\mu'_{\widetilde{F}} \in \mathcal{M}(\GoG_{\circ}^{\mathrm{Sch}})$.
The action of any generator $s$ is by a bijection on $V(F)$, so the uniform distribution on $V(F)$ is invariant with respect to the action $F_d \acts V(F)$. This implies that $\mu'_{\widetilde{F}}$ is invariant.

The graph structure of $\mu'_{\widetilde{F}}$ without the orientation and coloring is the same as $\mu_F$. Using our notation, we say $\Phi_{*} \mu'_{\widetilde{F}} = \mu_F$. These two phenomena, $\mu'_{\widetilde{F}}$ being invariant and $\Phi_{*} \mu'_{\widetilde{F}} = \mu_F$ are what motivate the definition of an invariant random Schreier decorations.

With this in mind it is relatively easy to prove that sofic random rooted graphs have invariant random Schreier decorations. Finding Schreier decorations of the finite approximations, then taking a subsequential weak limit of the corresponding random rooted Schreier graphs solves the problem. Theorem \ref{theorem:main} shows that the property of having an invariant random Schreier decoration does not distinguish soficity and unimodularity.

\begin{remark}
	For a fixed connected, $2d$-regular infinite rooted or unrooted graph finding a Schreier decoration is simply a compactness argument once one knows the finite version. In fact rooted (possibly infinite), connected Schreier graphs are in one-to-one correspondence with subgroups of $F_d$. 
	
	If one wants to build an invariant Schreier decoration however, one has to pick the individual Schreier decorations of the random instances of the unimodular random graph in a coherent way, so that the whole process becomes invariant. The additional freedom is that the choice can be random.
\end{remark}

\subsection{Graphings}

Graphings have quite a few equivalent definitions. We are going to stick with \cite{lovasz2012large}. 

\begin{definition}[Graphing] \label{def:graphing}
	Let $X$ be a standard Borel space and let $\nu$ be a probability measure on the Borel sets in $X$. A \emph{graphing} (with degree bound $2d$) is a graph $\G$ on $V(\G) = X $ with Borel edge set $E(\G) \subset X \times X$ in which all degrees are at most $2d$ and 
	\begin{equation} \label{eqn:graphing}
	\int_{A} \deg(x,B) \ d \nu (x) = \int_{B} \deg(x,A) \ d \nu (x)
	\end{equation}
	for all measurable sets $A,B \subseteq X$, where $\deg(x,S)$ is the number of edges from $x \in X$ to $S \subseteq X$.
\end{definition}

Formally the graphing consists of the quadruple $\G=(X,\mathcal{B}, E, \nu)$, where $\mathcal{B}$ denotes the Borel $\sigma$-algebra on $X$. We will suppress $\mathcal{B}$ from the notation.



Looking at the connected component of a random vertex will represent a  unimodular random rooted graph. Recall that for $x\in X$ its connected component in $\G$ is denoted $C_{\G}(x)$. It is a basic property of Borel graphs that the map $f : X \to \GoG_{\circ}^{2d}$, defined by $f(x)=\big(C_{\G}(x),x\big)$ is Borel. This way we can define $\mu_{\G} = f_{*} \nu$ similarly to the finite graph case.

The measure preserving property (\ref{eqn:graphing}) is closely related to the involution invariance that we used to define unimodularity. In fact (\ref{eqn:graphing}) implies that $\mu_{\G}$ is unimodular.

In the other direction the connection is a bit more subtle. Fix a unimodular random rooted graph $\mu \in \mathcal{M}(\GoG_{\circ}^{2d})$. We say that a graphing $\G$ represents $\mu$ if $\mu=\mu_{\G}$. 

\begin{theorem}[\cite{aldous2007processes}, \cite{elek2007note}] \label{theorem:representing_graphing}
	Every unimodular random rooted graph is represented by some graphing.
\end{theorem}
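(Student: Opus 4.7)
My plan is to construct $\G$ on an augmented vertex space where i.i.d.\ random real labels almost surely destroy all graph automorphisms, so that the tautological edge relation ``move the root by one step'' gives the correct degree and correct component structure. Concretely, let $X$ denote the space of triples $(G, o, \ell)$, where $(G, o) \in \GoG_\circ^{2d}$ and $\ell : V(G) \to [0, 1]$ is a labeling, taken modulo rooted label-preserving isomorphism. A standard Borel structure is obtained by extending the metric on $\GoG_\circ^{2d}$ to compare labeled balls around the root. Define $\nu$ by first sampling $(G, o)$ from $\mu$ and then independently attaching i.i.d.\ Uniform$[0,1]$ labels to the vertices of $G$.

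For the edges, declare $\bigl((G, o, \ell), (G', o', \ell')\bigr) \in E(\G)$ whenever there is a representative of the left class and a neighbor $v$ of $o$ in $G$ with $(G, v, \ell) = (G', o', \ell')$ in $X$. Borel measurability of this relation is routine from the construction of $X$. Since $\ell$ almost surely takes distinct values on the countable vertex set of $G$, the label-preserving automorphism group of $(G, o, \ell)$ is a.s.\ trivial, so distinct neighbors of $o$ in $G$ produce distinct points of $X$; hence the graphing degree at $x = (G, o, \ell)$ equals $\deg_G(o)$ a.s. For the same reason, the connected component of $x$ in $\G$ is $\{(G, v, \ell) : v \in V(G)\}$, which is in a.s.\ bijection with $V(G)$ and carries an induced graph structure isomorphic to $G$; forgetting labels shows $\mu_\G = \mu$.

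It remains to verify the mass-preservation identity~(\ref{eqn:graphing}) for $\G$, and this is exactly where the unimodularity of $\mu$ enters. Unfolding $\deg(x, B)$ and integrating against $\nu$, both sides of~(\ref{eqn:graphing}) reduce to expressions of the form
\[\int \sum_{o' \sim o} \mathbf{1}_A(G, o, \ell) \, \mathbf{1}_B(G, o', \ell) \, d\nu(G, o, \ell),\]
and its partner with the roles of $o$ and $o'$ swapped. Because the labeling is i.i.d.\ and independent of the choice of root, averaging over $\ell$ collapses these to integrals against $\widetilde{\mu}$ on birooted graphs, and the required equality is then the involution invariance $\iota_{*} \widetilde{\mu} = \widetilde{\mu}$.

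The main obstacle is the foundational bookkeeping: checking carefully that the quotient of labeled rooted graphs by label-preserving isomorphism is a standard Borel space, that the tautological edge relation is Borel, and that taking the quotient does not interfere with the degree and component computations outside a $\nu$-null set. Once that framework is set up, the translation between involution invariance of $\mu$ on $\GoG_{\circ\circ}^{2d}$ and the measure-preservation axiom~(\ref{eqn:graphing}) for $\G$ is essentially tautological.
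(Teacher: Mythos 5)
Your construction is exactly the one the paper itself sketches (following \cite{lovasz2012large}, Theorem 18.37, and adapts in Subsection \ref{subsection:constructing_graphings}): take the space of rooted, $[0,1]$-vertex-labeled graphs, push $\mu$ forward with i.i.d.\ labels to kill automorphisms, connect root-displacements by one step, and derive (\ref{eqn:graphing}) from involution invariance of $\widetilde{\mu}$. The argument is correct and matches the paper's approach, so there is nothing further to add.
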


As mentioned in the Introduction, the choice of a representing graphing is not unique. Graphings with distinctly different global measurable structure can represent the same unimodular random rooted graph.

In \cite{lovasz2012large} a construction is carefully explained and motivated. The representing graphings described there always have the same vertex and edge sets, only the measure changes. The space is a vertex-labeled version of $\GoG_{\circ}^{2d}$, with edges connecting graphs that are the same up to a displacement of the root to a neighbor. 

Starting from a unimodular random rooted graph $\mu$ one uses further random vertex labels to break all potential symmetries of any realizations. This results in a measure $\nu$ on the above space. The unimodularity of $\mu$ implies that (\ref{eqn:graphing}) is satisfied by $\nu$.

\subsection{Measuring edges of graphings}

The property \ref{eqn:graphing} of graphings allows one to measure set of edges in a meaningful way. Let $\widetilde{\nu}$ be the measure on $E \subseteq X \times X$ obtained the following way. For a measurable subset $C \subseteq E$ of edges let 
\[\widetilde{\nu} (C) = \frac{1}{2} \int_{X} \deg_{C}(x) \ d \nu(x).\]

Here $\deg_{C}(x)$ denotes the degree of $x$ using the edge set $C$, that is the number of edges in $C$ adjacent to $x$. The normalization accounts for counting edges from both endpoints. Intuitively equation \ref{eqn:graphing} expresses that edges have the same weight from both endpoints. For the edge set $C=E(A,B)$ of edges between $A$ and $B$ it say that measuring them from $A$ and $B$ gives the same answer. In our case the measure $\widetilde{\nu}$ takes values in $[0,2d]$.

\subsection{Sparse vertex labeling of graphings}

We will use vertex labeling to break local symmetries of graphings. Corollary \ref{corollary:sparse_labeling} below is a basic tool when working with graphings.

We say a labeling of a graph is $r$-sparse, if vertices of distance at most $r$ always have distinct labels. A $1$-sparse labeling is traditionally called a \emph{proper} labeling. 

Concerning vertex labelings of Borel graphs Kechris, Solecki and Todorcevic \cite[Theorem 4.6]{kechris1999borel} show that if the degree is bounded by $\Delta$, then there is a proper Borel labeling of the vertices with $\Delta+1$ labels.

One can also show that introducing additional edges in a Borel graph connecting vertices at distance at most $r$ gives a Borel graph. This together with the above coloring result has the following useful corollary. 

\begin{proposition}\label{corollary:sparse_labeling}
	For every bounded degree Borel graph $\G$ and $r \in \N$ there is an $r$-sparse Borel labeling $l : V \to [k]$ for some $k \in \N$.
\end{proposition}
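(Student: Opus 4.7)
The plan is to reduce the statement to the Kechris--Solecki--Todorcevic theorem quoted just above it. Given $\G$ with degree bound $\Delta$ and a fixed $r \in \N$, I would construct an auxiliary graph $\G^{(r)}$ on the same vertex set $V$, whose edges are exactly the pairs $\{x,y\}$ with $x \neq y$ and $d_\G(x,y) \leq r$. A proper Borel labeling of $\G^{(r)}$ with $k$ colors is by definition an $r$-sparse Borel labeling of $\G$, so it suffices to show that $\G^{(r)}$ is a Borel graph of bounded degree and then invoke the cited result.

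For Borelness, I would write the relation ``$d_\G(x,y) \leq r$'' as the finite union $\bigcup_{i=0}^{r} R_i$, where $R_i = \{(x,y) \in V \times V : d_\G(x,y) = i\}$. Since $R_1 = E(\G)$ is Borel by assumption, and each $R_i$ is obtained as an iterated relational composition of $R_1$ (using that the graph has countable, uniformly bounded degree, so the projections involved stay within Borel operations on a standard Borel space), the union is Borel; removing the diagonal preserves Borelness. For the degree bound, the number of vertices within $\G$-distance $r$ of any $x$ is at most $1 + \Delta + \Delta(\Delta-1) + \cdots + \Delta(\Delta-1)^{r-1} \leq \Delta^{r+1}$, so $\G^{(r)}$ has maximum degree at most some explicit $\Delta' = \Delta'(\Delta,r)$.

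Applying \cite[Theorem 4.6]{kechris1999borel} to $\G^{(r)}$ then yields a proper Borel labeling $l: V \to [k]$ with $k = \Delta' + 1$. By construction any two vertices $x \neq y$ with $d_\G(x,y) \leq r$ are adjacent in $\G^{(r)}$, hence receive distinct labels, so $l$ is the desired $r$-sparse Borel labeling of $\G$.

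The only mild subtlety is verifying Borelness of the composed distance relations; everything else is bookkeeping. Since $\G$ has uniformly bounded degree, the $i$-step reachability relation can be described by a Borel formula quantifying over finite tuples of vertices (``there exist $x = v_0, v_1, \ldots, v_i = y$ with each $\{v_{j-1},v_j\} \in E$''), and such existential quantification over finitely many coordinates in a standard Borel space produces Borel sets, so no genuine obstacle arises.
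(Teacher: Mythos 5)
Your proposal is correct and follows the same route the paper sketches: pass to the distance-$\leq r$ power graph, check that it is Borel and of bounded degree, and apply the Kechris--Solecki--Todorcevic proper Borel coloring theorem. The only point to phrase more carefully is the Borelness of the iterated reachability relations: projections of Borel sets are in general only analytic, so one must use that the edge relation has finite (hence countable) sections --- e.g.\ via the Luzin--Novikov uniformization theorem --- which is exactly the bounded-degree caveat you already gesture at.
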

\noindent
See for example \cite[Corollary 3.1]{csoka2016kHonig} for a more detailed exposition.

\section{Orientation} \label{section:orientation}

In this section we prove Theorem \ref{theorem:balanced_orientation}. At first we exhibit a way to choose a random balanced orientation of rooted, even degree infinite graphs. The important property will be that for any even degree graph $G$ and vertices $v_1, v_2$ the random orientations of $(G, v_1)$ and $(G,v_2)$ will be the same in distribution on the edges of $G$. We then explain how this allows one to construct a measurably orientable graphing to represent a unimodular random rooted graph. 

\subsection{Orienting even degree trees} \label{subsection:orienting_trees}
Let $(T,o)$ be an infinite (connected) rooted tree with even degrees. We describe a random balanced orientation of $(T,o)$. For all vertices $v \in V(T)$ let $\deg(v)=2d_v$. We start at the root, and orient $d_o$ edges inwards and $d_o$ edges outwards randomly. We proceed to the neighbors of $o$. If $v$ is a neighbor of $o$, then one of the $2 d_v$ edges at $v$ is already oriented, say towards $v$. So from the $2d_v-1$ undirected ones we orient $d_v-1$ inwards and $d_v$ outwards randomly, independently of the previous choice. We continue this procedure radially outwards from $o$, independently randomizing at each vertex. As $T$ is a tree we will always have exactly 1 already oriented edge at each vertex. The result of this random procedure is a balanced orientation of $(T,o)$.

\begin{lemma}
	The random orientation described above does not depend on the choice of root in $T$. More precisely, if $v_1$ and $v_2$ are two vertices of $T$, then the random orientations started at $v_1$ and $v_2$ are the same in distribution.
\end{lemma}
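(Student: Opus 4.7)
The plan is to reduce to the case of adjacent vertices by induction along a path, and then handle the adjacent case by conditioning on the orientation of the edge separating them.

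First I would treat the adjacent case: let $u = v_1$ and $v = v_2$ be adjacent with edge $e = (u,v)$. The marginal distribution of the orientation of $e$ under both rootings is uniform ($1/2$ each way). Under $u$-rooting this is immediate, since at $u$ we pick a uniformly random balanced orientation of the $2d_u$ edges at $u$, so $e$ is outgoing with probability $d_u/(2d_u) = 1/2$; symmetrically for $v$-rooting. Next, condition on the orientation of $e$, say $e$ points from $u$ to $v$. Under $u$-rooting, conditional on this event, the remaining $2d_u - 1$ edges at $u$ are uniformly distributed among configurations with $d_u - 1$ outgoing and $d_u$ incoming, and at $v$ the remaining $2d_v - 1$ edges are uniformly distributed among configurations with $d_v$ outgoing and $d_v - 1$ incoming. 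Under $v$-rooting, conditional on the same event, exactly the same two conditional distributions appear at $u$ and at $v$: at $v$ because $v$ is the root and $e$ is one of its edges, and at $u$ because $u$ is a non-root vertex with one already-oriented parent edge (namely $e$), and the remaining edges are then chosen uniformly among balanced completions.

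The key structural observation is that for any vertex $w \notin \{u,v\}$, the parent of $w$ (the neighbor closer to the root) is the same under both $u$-rooting and $v$-rooting, namely the unique neighbor of $w$ on the path from $w$ to $e$. So deleting $e$ yields two subtrees $T_u, T_v$ (containing $u, v$ respectively) on which the parent structure is identical under both rootings, and the outward-propagating randomization at each vertex $w \notin \{u,v\}$ is governed by the same rule depending only on the orientation of its parent edge. Since the local choices in the procedure are independent across vertices, the orientations on the two sides of $e$ are conditionally independent given the orientation of $e$ under either rooting. Combining with the agreement of marginals at $u$, $v$ and the propagations in $T_u, T_v$, the distributions on orientations of all of $T$ coincide.

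For the general case, connect $v_1$ and $v_2$ by a path in $T$ and apply the adjacent case along each edge of this path, concluding by transitivity. The main piece of care will be bookkeeping the conditional distributions at $u$ and $v$ and verifying that the formula for the conditional orientation at a non-root vertex given its parent edge is the same as the conditional orientation at the root given one of its edges, which is essentially the identity $\binom{2d-1}{d-1} = \binom{2d-1}{d}$ ensuring the two descriptions match.
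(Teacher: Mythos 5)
Your argument is correct, and it takes a genuinely different route from the paper. The paper fixes a saturated finite subtree $F$ containing $v_1$ and $v_2$, computes the probability of any prescribed balanced orientation of $F$ as an explicit product over internal vertices, and observes via $\binom{2d}{d}=2\binom{2d-1}{d}$ that the product collapses to $\frac{1}{2}\prod_i \binom{2d_{u_i}-1}{d_{u_i}}^{-1}$, which is manifestly independent of which vertex is processed first; it then invokes the fact that cylinders over such finite subtrees generate the topology on the space of orientations. You instead localize the root-dependence to a single edge: reduce to adjacent roots by transitivity along a path, condition on the orientation of the separating edge $e$ (whose marginal is uniform under either rooting), and check that the conditional law at each endpoint and the parent structure on the two components of $T-e$ agree, with conditional independence of the two sides. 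The same binomial identity does the work in both proofs, but your version makes the probabilistic mechanism more transparent (why the root does not matter: only the law of each edge's orientation given its "parent side" enters), at the cost of slightly more bookkeeping of conditional distributions; the paper's version is a single closed-form computation but needs the explicit remark that finite cylinders determine the law. One small point to make explicit when writing it up: "same in distribution" for orientations of an infinite tree means agreement on all finite-dimensional (cylinder) events, which your product-of-conditional-laws description does deliver, but it is worth saying so.
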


\begin{proof}
	Pick any \emph{saturated} finite subtree $F \subseteq T$ containing both $v_1$ and $v_2$ as internal vertices. By saturated we mean that vertices of $F$ are either leaves of $F$, or have all their neighbors in $F$ as well. The internal vertices of $F$ are the non-leaves. Fix any orientation of the edges of $F$ that is balanced at internal vertices. We compute the probability of getting this orientation of $F$ when starting our procedure at $v_1$. List the internal vertices of $F$ in a radially expanding order: $\{u_1, u_2, \ldots, u_n\}$, specifically $u_1 = v_1$. At $u_1$ we have to match 1 out of ${2d_{u_1} \choose d_{u_1}}$ possible orientations. At each later $u_i$ we have to match 1 out of the possible ${2d_{u_i}-1 \choose d_{u_i}}$ orientations of the remaining edges. Note that this does not depend on whether the edge already oriented at $u_i$ is inwards or outwards. In total, the probability of generating our fixed orientation from $v_1$ is
	
	\[\frac{1}{{2d_{u_1} \choose d_{u_1}}}\prod_{i=2}^{n}\frac{1}{{2d_{u_i}-1 \choose d_{u_i}}}.\]
	
	Using that ${2d_{u_1} \choose d_{u_1}} = 2 \cdot {2d_{u_1}-1 \choose d_{u_1}}$, we can rewrite this as 
	
	\[\frac{1}{2}\prod_{i=1}^{n}\frac{1}{{2d_{u_i}-1 \choose d_{u_i}}}.\]
	
	Notice that this later expression does not change if we reorder the $u_i$, so starting from $v_2$ we will get the same probability.
	
	This holds for all $F$ as above, and this implies that the two random processes are the same in distribution. Cylinders over such finite subgraphs form a base of the topology on the set of orientations.
\end{proof}

\subsection{Orienting even degree graphs}
We now define a random balanced orientation of rooted, bounded even degree infinite graphs. Again, the random orientation will not depend on the choice of root in distribution. 

Given such a graph $(G,o)$, we will first orient cycles as long as there are any. Our procedure will consist of countably many stages, with each stage consisting of countably many steps. During the procedure, we will have oriented and undirected edges. We say that at a certain point a cycle is undirected, if all its edges are undirected. Two cycles of the same length are neighbors, if they share an edge.

After stage $n$, with probability 1 there will be no undirected cycles of length at most $n$. At every step of stage $n$ we consider currently undirected cycles of length $n$, and randomize a uniform $[0,1]$ label for each. If an undirected cycle has a higher label than all of its neighbors, then it gets oriented, randomly in one of the two possible directions.

Since our graph has bounded degree, the number of neighbors of a cycle considered during stage $n$ is bounded. So as long as it remains undirected, it has a positive probability (bounded away from 0) to get a higher label than all its neighbors at every step. Therefore staying undirected after countably many steps happens with probability 0.

At every step the partial orientation is balanced, the indegrees and outdegrees of vertices are the same. Clearly the undirected degree of each vertex is always even. By the end of our procedure with probability 1 the undirected edges contain no cycle, so they span a disjoint union of even degree trees. We can randomly orient these trees by choosing a root in each component: for example the closest one to $o$ in $G$, or if there are multiple ones, then picking one uniformly at random. Then we use the random orientation described in Subsection \ref{subsection:orienting_trees} for the rooted trees.

We call the random balanced orientation of $(G,o)$ described above the \emph{canonical} random balanced orientation of $(G,o)$. The distribution on the edges does not depend on the choice of a root, as the cycle removal never even considered the root, while the orientation of the trees was shown to be invariant under changing the root. In the next subsection we show how the canonical random orientation allows us to build a measurably orientable graphing for any unimodular random network.

\subsection{Constructing orientable graphings} \label{subsection:constructing_graphings}

Theorem \ref{theorem:representing_graphing} states that any unimodular random network can be represented by a graphing. The construction thoroughly explained in \cite[Theorem 18.37]{lovasz2012large} can be adapted to our situation. We outline our construction and the key ideas, but do not burden the reader with lengthy exposition. 

The set of vertices of our graphing will be the space of $2d$-regular, oriented, connected rooted graphs with labels from $[0,1]$ on each vertex. That is, every node is a $2d$-regular rooted graph $(H,v)$ with an orientation $\textrm{or}: E(H) \to V(H)$ and a vertex labeling $l: V(H) \to [0,1]$. Let $\mathfrak{G}^{\textrm{or},l}_{\circ}$ denote the set of such quadruples.

The Borel structure is generated by the following cylinder sets: for any $r \geq 0$, we fix the oriented isomorphism type of the ball with radius $r$ about the root, and also for every vertex in the ball we specify a Borel set in $[0, 1]$ from which the weight is to be chosen.

For a node $(H,v,\textrm{or}, l)$ we introduce an edge to the nodes $(H,v',\textrm{or}, l)$, where $v'$ runs through the neighbors of $v$ in $H$.
In other words, $(H,v,\textrm{or}, l)$ and $(H',v',\textrm{or}', l')$ are connected if there is an isomorphism (of unrooted graphs) $\eta: H \to H'$, that preserves the orientation and the vertex labels, and $\eta(v) \sim v'$ in $H'$.

Starting from a unimodular measure $\mu$ on $\mathfrak{G}_{\circ}^{2d}$ we construct a measure $\bar{\mu}$ on $\mathfrak{G}^{\textrm{or},l}_{\circ}$. We first choose $(G,o)$ according to $\mu$, then put i.i.d.\ $[0,1]$ labels on the vertices, and randomize an orientation according to the canonical balanced orientation of $(G,o)$. 

\begin{proposition} \label{proposition:orientable_graphing}
	The Borel graph $\mathfrak{G}^{\textrm{or},l}_{\circ}$ with the measure $\bar{\mu}$ represents the unimodular random rooted graph $\mu$. Moreover, it has a measurable balanced orientation. 
\end{proposition}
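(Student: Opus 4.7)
The plan is to verify three things: that $(\mathfrak{G}_{\circ}^{\textrm{or},l}, \bar\mu)$ is a graphing in the sense of Definition \ref{def:graphing}, that it represents $\mu$, and that the orientation recorded in each node induces a Borel balanced orientation. A preliminary observation is that with $\bar\mu$-probability one the i.i.d.\ labels drawn from $[0,1]$ are all distinct, so the decorated realization has no nontrivial automorphism; consequently the connected component of a $\bar\mu$-random point in $\mathfrak{G}_{\circ}^{\textrm{or},l}$ is canonically identified, as a decorated unrooted graph, with the graph $G$ generated in the construction of $\bar\mu$. This immediately yields the representation property once the graphing property is established: the component map $f:x \mapsto (C_\G(x),x)$ composed with the forgetting map $\Phi$ agrees with the map sending a realization of $\bar\mu$ to the underlying $(G,o)$, so $\mu_\G = \Phi_* \bar\mu = \mu$.

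The main technical step is verifying identity (\ref{eqn:graphing}) for $\bar\mu$, which is equivalent to involution invariance on the birooted space. Since an edge of $\mathfrak{G}_{\circ}^{\textrm{or},l}$ keeps the underlying graph, orientation and labeling fixed and only moves the root to a neighbor, a birooted sample from $\bar\mu$ can be described generatively: draw $(G,o) \sim \mu$, attach i.i.d.\ uniform $[0,1]$ labels to $V(G)$, run the canonical random balanced orientation of $(G,o)$, and finally pick a uniform random neighbor $o'$ of $o$. Invariance under swapping $o$ and $o'$ then reduces, by conditioning, to three facts: unimodularity of $\mu$ handles the underlying graph; the i.i.d.\ labels depend only on $V(G)$ and are insensitive to which vertex is called the root; and the canonical random orientation produces the same distribution on $E(G)$ whether generated from $(G,o)$ or $(G,o')$, because the cycle-exhaustion stage never refers to the root and the tree stage is root-invariant by the Lemma of Subsection \ref{subsection:orienting_trees}.

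The measurable balanced orientation of $\G = (\mathfrak{G}_{\circ}^{\textrm{or},l}, \bar\mu)$ is then tautological: for an edge $\{x,y\}$ with $x=(H,v,\textrm{or},l)$ and $y=(H,v',\textrm{or},l)$, orient it from $x$ to $y$ precisely when $\textrm{or}$ orients the corresponding edge of $H$ from $v$ to $v'$. This is Borel because the orientation is part of the data determining the cylinder sets of $\mathfrak{G}_{\circ}^{\textrm{or},l}$, and balanced at every node because $\textrm{or}$ is balanced at the root. The principal obstacle is the third ingredient in the previous paragraph: without the root-independence of the canonical orientation in distribution, augmenting $\mu$ with an orientation would break involution invariance and the construction would not produce a graphing at all. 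The remaining content is essentially the adaptation of the standard Aldous--Lyons construction of a representing graphing, with the orientation playing the role of an additional invariant decoration carried along.
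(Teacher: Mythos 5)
Your proposal is correct and follows essentially the same route as the paper: involution invariance of $\bar{\mu}$ is deduced from unimodularity of $\mu$ together with the root-independence in distribution of the i.i.d.\ labels and of the canonical random orientation, the labels break all symmetries to give the representation property, and the balanced orientation is read off from the orientation data stored at each node. Your writeup just spells out these steps in more detail than the paper does.
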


\begin{proof}
	The measure $\bar{\mu}$ will be involution invariant on $\mathfrak{G}^{\textrm{or},l}_{\circ}$, because $\mu$ is involution invariant on $\mathfrak{G}_{\circ}^{2d}$, and both the orientation an the labels are chosen in a way that is in distribution the same regardless of the position of the root. This makes sure that $(\mathfrak{G}^{\textrm{or},l}_{\circ}, \bar{\mu})$ is indeed a graphing. 
	
	The labeling breaks all symmetries of $(G,o, \mathrm{or})$ almost surely, which makes sure that $(\mathfrak{G}^{\textrm{or},l}_{\circ}, \bar{\mu})$ represents $\mu$.
	
	Finally by construction the edges of $\mathfrak{G}^{\textrm{or},l}_{\circ}$ can be measurably oriented. The edge $(H,v,\textrm{or}, l) \sim (H,v',\textrm{or}, l)$ is oriented towards $(H,v,\textrm{or}, l)$ if and only if $(v,v')$ is oriented towards $v$ in $(H,v,\textrm{or})$.
\end{proof}

\medskip

This finishes the proof of Theorem \ref{theorem:balanced_orientation}.

\section{Measurable colorings of bipartite graphings} \label{section:coloring}

In this section we prove Theorem \ref{theorem:almost_proper_coloring}. We will repeatedly make use of two results of Csóka, Lippner and Pikhurko \cite[Theorems 1.9 and 4.2]{csoka2016kHonig}. Their main result is the following

\begin{theorem} \label{theorem:CSLP_main}
	For every $d \geq 1$ there is $r_0 = r_0(d)$ such that if $\G = (X_1, X_2,E, \nu)$ is a bipartite graphing with maximum degree at most $d + 1$ such that the set $J$ of vertices of degree $d + 1$ is $r_0$-sparse, then there is a measurable proper edge coloring of $\G$ with $d+1$ colors.
\end{theorem}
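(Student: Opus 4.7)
The plan is to prove this by mimicking the classical König-style iterative matching extraction, but adapted to the Borel setting. In a finite bipartite graph of maximum degree $d+1$, König's theorem produces a proper $(d+1)$-edge coloring by repeatedly peeling off perfect matchings. The measurable analogue fails in general because Borel perfect matchings need not exist in $d$-regular bipartite graphings; the role of $r_0$-sparsity of the degree-$(d+1)$ set $J$ is precisely to provide enough "slack" (non-defect vertices) to carry out the construction in a Borel way.

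My plan is the following. First, using Proposition \ref{corollary:sparse_labeling}, obtain an $R$-sparse Borel labeling of $V(\G)$ for a radius $R = R(d)$ sufficiently large in terms of $r_0$. This labeling canonically breaks ties inside each ball of radius $r_0$ around a defect vertex, and lets one select Borel objects (matchings, augmenting paths, Kempe chains) in a deterministic fashion. Second, construct a Borel matching $M_1$ of $\G$ which saturates every $v \in J$: the $r_0$-sparsity of $J$ means that matchings saturating $J$ can be found locally, in disjoint finite balls around defect vertices, and combined to a global Borel matching. Removing $M_1$ yields a bipartite graphing $\G_1$ of maximum degree at most $d$ in which the set of vertices of degree exactly $d$ still has a controlled sparse structure (vertices whose $M_1$-partner lies in $J$ now have degree at most $d-1$, which are the new "slack" vertices). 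The theorem then follows by iterating: extract further Borel matchings $M_2, \ldots, M_{d+1}$, at each step using augmenting-path arguments of bounded length in the Borel setting to keep all vertices saturated while the current max degree drops by one.

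The main obstacle is controlling how the sparsity of the "full-degree" set degrades under successive matching extractions. Each step must preserve some quantitative form of the sparsity hypothesis so that the next matching can again be selected via bounded-radius local choices. This is exactly what forces $r_0$ to be chosen as a function of $d$: one needs $r_0(d)$ large enough that after $d$ rounds of extraction, enough slack remains at every scale to carry out the next round of augmenting-path surgery in a Borel way. Concretely, I expect the required augmenting paths at step $k$ to have length bounded by some function of $d-k$, so that taking $r_0(d)$ to exceed the sum of these bounds suffices. Verifying that such paths can always be chosen in a Borel and mutually non-interfering manner — likely using a measurable Vizing-fan or Kempe-chain argument together with the $R$-sparse labeling — is the technical heart of the proof.
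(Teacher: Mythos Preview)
The paper does not prove this statement. Theorem~\ref{theorem:CSLP_main} is quoted from Cs\'oka, Lippner and Pikhurko and used as a black box, together with the companion inductive step Theorem~\ref{theorem:CSLP_induction}; there is no proof in the paper to compare your proposal against.

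That said, your outline has a genuine gap at the very first step. Your matching $M_1$ is constructed locally in the disjoint balls around $J$, so it consists essentially of one edge per defect vertex. After removing $M_1$ the maximum degree is indeed at most $d$, but the set of degree-$d$ vertices is \emph{not} sparse: every vertex that originally had degree $d$ and was not touched by $M_1$ --- which is almost the entire graphing --- still has degree $d$. What becomes sparse is the complementary ``slack'' set of vertices of degree $\le d-1$, exactly as your parenthetical notes. These are opposite situations. To iterate you would need $M_2$ to saturate every degree-$d$ vertex of $\G_1$, i.e.\ to be a near-perfect matching of an essentially $d$-regular bipartite graphing; this cannot be arranged by bounded-radius local choices and is precisely the hard part of the problem. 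Indeed, the actual content of Theorem~\ref{theorem:CSLP_induction} is that one can produce a Borel matching whose \emph{unmatched} set is $r$-sparse, which is what makes the full-degree set sparse after removal; your $M_1$ achieves only the easy half (saturating $J$), so the induction never gets off the ground.
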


During the proof of \ref{theorem:CSLP_main} they use the following theorem as an inductive step.

\begin{theorem} \label{theorem:CSLP_induction}
	For every $d > 2$ and $r > 1$, there is $r_1 = r_1(d, r)$ such that the
	following holds. Let $\G = (X_1,X_2, E, \nu)$ be a bipartite graphing with degree bound $d + 1$ that has no finite components. If the set $J \subseteq X_1 \cup X_2$ of vertices of degree $d + 1$ is $r_1$-sparse, then there is a Borel matching $M$ such that, up to removing a null-set, $G \setminus M$ has maximum degree at most $d$ and its set of degree-$d$ vertices is $r$-sparse.
\end{theorem}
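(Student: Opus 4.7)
The plan is to build the matching $M$ in two phases: first a Borel matching $M_0$ saturating the set $J$ of degree-$(d+1)$ vertices, and then iterative augmentation along short alternating paths to force the degree-$d$ vertex set of $\G \setminus M$ to become $r$-sparse. Throughout, Proposition \ref{corollary:sparse_labeling} will supply Borel $s$-sparse vertex labelings with finitely many labels, enabling canonical local choices.

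Phase 1 (saturating $J$). Fix a Borel $r_1$-sparse labeling of $V$ with finitely many colors, and note that since $J$ is $r_1$-sparse with $r_1 \geq 3$, distinct vertices of $J$ have disjoint open neighborhoods. For each $v \in J$, let $n(v)$ be its neighbor of smallest label, and put the edge $\{v, n(v)\}$ into $M_0$. The result is a Borel matching saturating every vertex of $J$, so $\G \setminus M_0$ already has maximum degree at most $d$. The only remaining task is to sparsify the degree-$d$ vertex set of $\G \setminus M_0$, which decomposes as $D = J \sqcup U$ where $U$ consists of the $\G$-degree-$d$ vertices outside $J$ that are unsaturated by $M_0$.

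Phase 2 (sparsifying $D$ via short augmentations). Starting from $M := M_0$, I would iteratively modify $M$ along $M$-alternating paths of $\G$ of length at most some $L = L(r, d)$, with two goals: (a) whenever two elements of $U$ lie within $\G$-distance $r$, augment $M$ along a short path connecting them, simultaneously saturating both; (b) whenever an element of $U$ lies within $\G$-distance $r$ of a vertex of $J$, augment $M$ along a short path ending at some degree-$<d$ vertex outside the $r_1/2$-neighborhood of $J$, saturating the $U$-vertex. Both kinds of augmentations shrink $U \cap B_\G(J, r)$ or pair up close $U$-vertices, and crucially neither ever enters the $1$-neighborhood of $J$ if the path has length below $L$ and $r_1$ is chosen larger than $L + r$; so the saturation of $J$ by $M$ is preserved. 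To execute countably many augmentations Borel-ly and without conflict, I would refine the sparse labeling of Proposition \ref{corollary:sparse_labeling} to a finite-color partition of candidate alternating paths so that, within each color class, the augmentations are pairwise vertex-disjoint; each color then yields one Borel round of modifications, and finitely many rounds suffice to make $D$ an $r$-sparse set.

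The main obstacle is the finite combinatorial input guaranteeing that the needed augmenting paths actually exist, with length bounded uniformly in $r$ and $d$. Concretely, inside a ball of radius $L$ around any close pair of $U$-vertices (or any boundary $U$-vertex near $J$), one must find an $M$-alternating path of the required type; this is a finite bipartite Hall / augmenting-path claim, and it is here that the hypothesis that $\G$ has no finite components plays its role, supplying enough local edges to build such paths. The quantitative size $r_1 = r_1(d, r)$ is then chosen large enough that (i) all augmenting paths fit inside the $r_1$-buffer around each vertex of $J$ without touching another element of $J$, and (ii) the finite Hall count inside each local ball actually closes. This last point—pinning down $L$ and $r_1$ so that both the existence of the augmenting paths and their non-interference with $J$ simultaneously hold—is the main technical content of the proof.
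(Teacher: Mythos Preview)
The paper does not prove this statement at all: Theorem~\ref{theorem:CSLP_induction} is quoted verbatim from Cs\'oka, Lippner and Pikhurko \cite[Theorem~4.2]{csoka2016kHonig} and used as a black box in the proof of Theorem~\ref{theorem:almost_proper_coloring}. There is therefore no proof in the present paper to compare your proposal against.

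As for the proposal itself, your outline is in the right spirit---saturate $J$ first, then use bounded-length alternating-path augmentations organized via sparse Borel labelings---and this is indeed roughly how the argument in \cite{csoka2016kHonig} proceeds. But your sketch is not yet a proof: you explicitly flag the ``finite combinatorial input'' guaranteeing the existence of the required augmenting paths as an unproved obstacle, and that is precisely where the work lies. In particular, your Phase~2(b) asserts that near a vertex of $J$ one can always find a short augmenting path ending at a vertex of degree $<d$; but in a graphing that is $d$-regular away from the sparse set $J$ (which is the case the theorem must cover when applied inductively), there need not be any such low-degree vertex nearby, so this step as written can fail. The actual argument has to allow augmenting paths that shift which vertices near $J$ are saturated rather than terminating at a genuinely low-degree vertex, and controlling the resulting interactions is the substantive content you have not supplied.
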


We will follow the induction and make sure the color $(d+1)$ is used rarely. Note that vertices of high degree being sparse does not immediately imply that they have small density, there might be small connected components. However, if a component contains at least 2 such vertices, then the density can be bound in terms of the sparsity. Ruling out components with exactly one high degree vertex is key to our proof of Theorem \ref{theorem:almost_proper_coloring}.

\medskip
We start by proving two lemmas to take care of finite components in graphings.

\begin{lemma} \label{lemma:finite_smart_coloring}
	Let $G=(S,T,E)$ be a finite bipartite graph with degrees bounded by $(d+1)$, and such that 
	\begin{enumerate}
		\item \label{condition:bad_points_sparse} the vertices of degree $(d+1)$ are 3-sparse, and
		\item there are at most $\rho \cdot |V(G)|$ vertices of degree $(d+1)$ in total, $(0 \leq \rho \leq 1)$.
	\end{enumerate} 
	Then there is a proper edge coloring of $G$ with $(d+1)$ colors such that there are at most $\rho \cdot |V(G)|$ edges of color $(d+1)$.
\end{lemma}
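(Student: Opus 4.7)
The plan is to use König's edge coloring theorem after removing a small matching that absorbs all the ``bad'' high-degree vertices. Let $J \subseteq V(G)$ denote the set of vertices of degree exactly $d+1$. I will build a matching $M \subseteq E(G)$ with the following two properties: (a) every $u \in J$ is incident to exactly one edge of $M$, and (b) $|M| = |J|$. Granting this, $G \setminus M$ is bipartite with maximum degree at most $d$: every $u \in J$ loses exactly one incident edge and drops to degree $d$, while every other vertex already had degree $\leq d$. Applying König's line coloring theorem to $G \setminus M$ yields a proper edge coloring with colors $\{1, \ldots, d\}$, and then painting all of $M$ with color $d+1$ produces a proper $(d+1)$-edge coloring of $G$ using the last color on exactly $|M| = |J| \leq \rho \cdot |V(G)|$ edges, as required.

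The construction of $M$ is the content of the lemma, and it is where condition \eqref{condition:bad_points_sparse} is used. Because $J$ is $3$-sparse, any two elements $u, u' \in J$ satisfy $\mathrm{dist}_G(u, u') \geq 4$; in particular, the neighborhoods $N_G(u)$ and $N_G(u')$ are disjoint, and neither contains a vertex of $J$. Therefore, for each $u \in J$ I pick an arbitrary neighbor $v_u \in N_G(u)$ (such a neighbor exists since $\deg(u) = d+1 \geq 1$) and set
\[
M = \bigl\{\, \{u, v_u\} \; : \; u \in J \,\bigr\}.
\]
The disjointness of the $N_G(u)$ guarantees that the chosen $v_u$ are pairwise distinct, and the fact that none of them lies in $J$ guarantees that $v_u \neq u'$ for any other $u' \in J$. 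Hence no two edges in $M$ share an endpoint, so $M$ is a matching, and by construction properties (a) and (b) both hold.

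I do not expect any serious obstacle: the 3-sparseness hypothesis is strong enough to make the matching construction essentially greedy (in fact 2-sparseness would already suffice), and König's theorem provides the rest. The only subtlety worth verifying is that the single edge of $M$ incident to each $u \in J$ really does drop its degree to $d$ and not further, which is immediate from the matching property.
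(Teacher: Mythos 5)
Your proof is correct and is essentially the paper's argument: the paper likewise picks one arbitrary edge at each degree-$(d+1)$ vertex, colors those edges $(d+1)$ (observing that $3$-sparseness makes this partial coloring proper, i.e.\ the chosen edges form a matching), and finishes with K\H{o}nig's line coloring theorem on the remaining graph of maximum degree $d$. Your extra verification that the chosen edges are pairwise disjoint, and the remark that $2$-sparseness would suffice, are both accurate but do not change the approach.
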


\begin{proof}
	For each vertex of degree $(d+1)$ choose an arbitrary adjacent edge and color it with $(d+1)$. As these points are 3-sparse, this coloring is so far proper. We colored at most $\rho \cdot |V(G)|$ edges with color $(d+1)$. The remaining graph has degrees at most $d$, so by Kőnig's line coloring theorem it can be properly colored with $d$ colors. Notice that if $\rho=0$ the statement is simply Kőnig's theorem. 
\end{proof}

\medskip

\begin{lemma} \label{lemma:finite_components_smart_coloring}
	Let $\G=(X_1,X_2,E, \nu)$ be a bipartite graphing with all connected components finite and satisfying the conditions of Lemma \ref{lemma:finite_smart_coloring}. Then it has a measurable proper edge coloring with $(d+1)$ colors such that the $\widetilde{\nu}$-measure of edges of color $(d+1)$ is at most $\rho$.
\end{lemma}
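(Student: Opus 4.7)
The plan is to apply Lemma \ref{lemma:finite_smart_coloring} component-by-component in a canonical way that makes the overall edge coloring Borel, and then to verify the measure bound using the fact that $\nu$ is uniform on each finite connected component of $\G$. To begin, I would fix a Borel linear ordering $\prec$ on $X = X_1 \cup X_2$, say by pulling back the usual ordering on $[0,1]$ along a Borel isomorphism. Since the connectedness relation is a countable Borel equivalence relation with finite classes, the map $x \mapsto V(C_{\G}(x))$ is Borel with values in the standard Borel space $[X]^{<\omega}$ of finite subsets of $X$, and so assigns to each $x$ its full finite component regarded as a $\prec$-ordered finite graph.

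On each such component $C$ I would then deterministically carry out the procedure from the proof of Lemma \ref{lemma:finite_smart_coloring}: for every vertex $v \in V(C)$ of degree $d+1$, color the edge from $v$ to its $\prec$-smallest neighbor with color $(d+1)$. The 3-sparsity hypothesis ensures these distinguished edges are vertex-disjoint, so the partial coloring is proper and every vertex has at most one color-$(d+1)$ edge. The remaining edges form a bipartite subgraph of maximum degree $d$, to which I would assign the lexicographically smallest proper $d$-edge-coloring relative to the $\prec$-induced ordering on $E(C)$; this exists by Kőnig's line coloring theorem and is unique, hence canonical. Since every step is a deterministic function of the finite ordered component, the resulting map $c : E \to [d+1]$ is Borel.

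To bound $\widetilde{\nu}(c^{-1}(d+1))$, I would first note that the graphing axiom applied to $A = \{u\}$, $B = \{v\}$ for any edge $uv$ gives $\nu(\{u\}) = \nu(\{v\})$, so $\nu$ is uniform on every finite component, with $\nu(\{x\}) = \nu(V(C))/|V(C)|$ for $x \in V(C)$. Writing $E_{d+1}(C)$ for the color-$(d+1)$ edges of $C$ and using Lemma \ref{lemma:finite_smart_coloring},
\[ \widetilde{\nu}\big(c^{-1}(d+1)\big) = \frac{1}{2} \int_X \deg_{c^{-1}(d+1)}(x)\, d\nu(x) = \sum_C \frac{\nu(V(C))}{|V(C)|} \cdot \big|E_{d+1}(C)\big| \leq \rho \sum_C \nu(V(C)) = \rho. \]

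The main obstacle I anticipate is the formal verification that $c$ is Borel: one must check that the component-enumeration map is Borel and that the deterministic coloring algorithm, including the Kőnig step, is a Borel operation on the space of finite ordered bipartite graphs. Both follow from standard descriptive-set-theoretic facts about countable Borel equivalence relations with finite classes, after which the measure-theoretic estimate above is routine thanks to the per-component uniformity of $\nu$.
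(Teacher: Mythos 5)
Your overall strategy is the same as the paper's: canonicalize the choice in Lemma \ref{lemma:finite_smart_coloring} component by component using a Borel device (the paper uses $i$-sparse Borel labelings from Proposition \ref{corollary:sparse_labeling} and treats components of each size separately; you use a Borel linear order and lexicographic choices), and the Borel-measurability part of your argument is fine modulo the standard facts you cite. The problem is in the measure estimate. Your justification that ``$\nu$ is uniform on every finite component'' via the graphing axiom applied to singletons gives $\nu(\{u\})=\nu(\{v\})$ for adjacent $u,v$, but when $\nu$ is atomless (the typical case, e.g.\ Lebesgue measure on $[0,1]$) this reads $0=0$ and carries no information; the identity $\nu(\{x\})=\nu(V(C))/|V(C)|$ is then the vacuous $0=0/|V(C)|$. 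Worse, your displayed computation sums over the individual components $C$, of which there are uncountably many, each with $\nu(V(C))=0$; in particular $\sum_C \nu(V(C))$ is not $1$, and the whole chain of equalities collapses. As written, the estimate only works for purely atomic $\nu$.

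The fix is to integrate over Borel \emph{position classes} rather than sum over components. Fix a (labeled or $\prec$-ordered) isomorphism type $F$ of component together with its chosen coloring, and for each vertex position $j$ of $F$ let $Y_j\subseteq X$ be the Borel set of vertices occupying position $j$ in a component of that type. Applying the graphing axiom (\ref{eqn:graphing}) to $A=Y_j$, $B=Y_{j'}$ for adjacent positions $j\sim j'$ gives $\nu(Y_j)=\nu(Y_{j'})$, and connectivity of $F$ then gives that all $\nu(Y_j)$ are equal; this is the correct, nonvacuous form of ``uniformity on components'' (equivalently, the invariance of $\nu$ under the partial Borel bijections generated by $\G$). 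With this,
\[
\frac{1}{2}\int_{\bigcup_j Y_j}\deg_{c^{-1}(d+1)}(x)\,d\nu(x)
=\frac{1}{2}\sum_j \delta_j\,\nu(Y_j)
=\frac{\nu\bigl(\bigcup_j Y_j\bigr)}{|V(F)|}\,\bigl|E_{d+1}(F)\bigr|
\le \rho\,\nu\Bigl(\bigcup_j Y_j\Bigr),
\]
where $\delta_j$ is the number of color-$(d+1)$ edges at position $j$ in $F$ and $\sum_j\delta_j=2|E_{d+1}(F)|$. Summing over the countably many types $F$ yields $\widetilde{\nu}(c^{-1}(d+1))\le\rho$. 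This is exactly the estimate the paper performs (tersely) one component-size at a time; once you replace your per-component sum by this per-position-class integral, your proof is correct.
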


\begin{proof}
	For $i=1,2,...$ we edge color the components with exactly $i+1$ vertices. For each $i$ we fix an $i$-sparse Borel labeling $l: X_1 \cup X_2 \to [k_i]$. In components of size $i+1$ all vertices have different label. For each isomorphism type of such labeled graphs choose a way to color its edges as in Lemma \ref{lemma:finite_smart_coloring}, and apply this coloring consistently everywhere. At each step the color classes are Borel, and there are countably many steps, so the color classes in the end are also Borel.
	 
	At each step the measure of edges colored by $(d+1)$ is at most $\rho$ times the measure of vertices with components of size $i+1$. As $\mu(X_1 \cup X_2) = 1$, the measure of edges colored $(d+1)$ in the end is at most $\rho$. 
\end{proof}

\medskip
The following proposition is the first step of our inductive procedure to prove Theorem \ref{theorem:almost_proper_coloring}.

\begin{proposition} \label{proposition:induction_start}
	Let $\G=(X_1,X_2,E, \nu)$ be a bipartite graphing with degrees $2$ or $3$. Assume that the vertices of degree $3$ are $r$-sparse for $r \geq r_0(3)$, where $r_0$ is the constant from Theorem \ref{theorem:CSLP_main}. Then it has a measurable edge coloring with 3 colors such that the $\widetilde{nu}$-measure of edges of the 3rd color is at most $4/r$. 
\end{proposition}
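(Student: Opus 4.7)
The plan is to realize color $3$ as the disjoint union of two Borel matchings $M_0, M_1$, each of $\widetilde{\nu}$-measure at most $2/r$: $M_0$ will saturate the set $V_3$ of degree-$3$ vertices, and $M_1$ will cut the bi-infinite components of the residual $\G \setminus M_0$ so that what remains can be properly $2$-edge-colored componentwise.

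The first step is a density bound $\nu(V_3) \leq 2/r$. By $r$-sparsity, every $x$ with $d(x,V_3) < r/2$ has a unique closest point in $V_3$, so the corresponding Voronoi cells are disjoint. Moreover, if $y \in V_3$, then walking out from $y$ along any one of its three incident edges yields a path whose first $\lfloor r/2 \rfloor$ vertices avoid $V_3 \setminus \{y\}$ (again by sparsity) and are therefore all distinct, so the ball $B(y, \lfloor (r-1)/2 \rfloor)$ contains at least $r/2$ vertices. Applying the mass transport principle to the indicator $F(x,y) = \mathbf{1}[y \text{ is the unique closest } V_3\text{-point to } x,\ d(x,y) < r/2]$ then gives
\[
\frac{r}{2}\,\nu(V_3) \;\leq\; \int_{V_3} |B(y, \lfloor (r-1)/2 \rfloor)|\, d\nu(y) \;=\; \nu\big(\{x : d(x,V_3) < r/2\}\big) \;\leq\; 1.
\]

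Next I would construct $M_0$. Using Proposition \ref{corollary:sparse_labeling} fix a $3$-sparse Borel vertex labeling $\ell$. For each $v \in V_3$ let $e(v)$ be the edge from $v$ to its $\ell$-smallest neighbor. Since $r \geq 3$ forbids two $V_3$-vertices from sharing a neighbor, the edges $e(v)$ are pairwise disjoint, so $M_0 := \{e(v) : v \in V_3\}$ is a Borel matching with $\widetilde{\nu}(M_0) = \nu(V_3) \leq 2/r$. The residual $\G \setminus M_0$ is bipartite of maximum degree $2$; its connected components are finite paths, finite even cycles, one-sided infinite rays, or bi-infinite lines, and the first three types admit componentwise Borel proper $2$-edge-colorings using $\ell$ as a canonical starting rule.

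The sole remaining obstacle, and the crux of the argument, is the bi-infinite lines of the residual. Here I would invoke the hyperfiniteness of max-degree-$2$ Borel graphs (equivalently, a Rokhlin-type lemma for the induced $\mathbb{Z}$-actions): for every $\varepsilon > 0$ there is a Borel edge set of $\widetilde{\nu}$-measure at most $\varepsilon$ whose removal leaves only finite components. Taking $\varepsilon = 2/r$ and exploiting that $V_3$-vertices along any bi-infinite residual line are at mutual distance at least $r$, I can further arrange that the chosen edges lie at distance at least $1$ from $V_3$; call the result $M_1$. Then $M_0 \cup M_1$ is a matching, and every component of $\G \setminus (M_0 \cup M_1)$ is finite, bipartite, and of maximum degree $2$, hence properly Borel $2$-edge-colorable component by component. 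Coloring $M_0 \cup M_1$ with color $3$ and the remainder with colors $1, 2$ yields the desired proper $3$-edge-coloring with $\widetilde{\nu}(\text{color }3) \leq 4/r$. The main subtlety is the hyperfiniteness step: one must simultaneously keep $M_1$ Borel, bound its measure by $2/r$, respect the sparsity constraint near $V_3$, and finitize every bi-infinite component.
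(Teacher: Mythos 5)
Your argument is correct in outline, but it takes a genuinely different route from the paper. The paper first invokes Theorem \ref{theorem:CSLP_main} to get a proper $3$-edge-coloring of the infinite components and then runs $r+1$ phases of local recolorings along alternating red--blue paths to make the ``standard'' purple edges $r$-sparse, handling finite components separately via a parity argument and K\H{o}nig's theorem. You instead peel off a Borel matching $M_0$ saturating $V_3$ first, which drops the maximum degree to $2$, and then pay a second small matching $M_1$ to cut the bi-infinite lines into finite pieces. This completely avoids Theorem \ref{theorem:CSLP_main} in the base case and replaces the recoloring phases by the (much softer) fact that degree-$\le 2$ graphings are hyperfinite; what you lose is only that the trick is special to degree $2$ or $3$ and could not serve as the inductive step for higher degrees, which is why the paper still needs the Cs\'oka--Lippner--Pikhurko machinery elsewhere. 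Note also that your hyperfiniteness step can be made elementary and self-contained using Proposition \ref{corollary:sparse_labeling}: take a maximal Borel $N$-separated set of vertices on the lines with $N\ge r$, delete one designated edge at each marker (choosing the edge toward the smaller-labelled neighbour), and then shift the deleted edge off $V_3$ and thin adjacent deletions to a matching exactly as you describe.

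Two points deserve care. First, your claim that walking out of $y\in V_3$ along \emph{any} of its three edges produces $\lfloor r/2\rfloor$ distinct vertices is not literally true: two of the three directions may close up into a short cycle through $y$ whose other vertices have degree $2$. The conclusion $|B(y,\lfloor (r-1)/2\rfloor)|\ge r/2$ still holds, because at least one direction cannot return to $y$ (a returning walk consumes two of $y$'s three edge-slots, and a third walk cannot re-enter such a cycle through its degree-$2$ vertices), so fix the justification rather than the estimate. Second, when assembling the proper coloring you should check that $M_1$ avoids not only $V_3$ but also the chosen neighbours forming the other endpoints of $M_0$; this is automatic in your setup, since those neighbours have residual degree $1$ and hence never lie on a bi-infinite line, but it should be said. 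With these repairs the proof goes through and gives the same bound $\widetilde{\nu}(M_0)+\widetilde{\nu}(M_1)\le 2/r+2/r=4/r$ as the paper's split of the purple edges into $P_2$ and $P_1$.
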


\begin{proof}
	We will use 3 colors: red, blue and purple, with as few purple edges as possible. It suffices to prove the theorem when $\G$ only has finite components, and also when it only has infinite components. In general $\G$ can be split into two measurable parts according to the components being finite or infinite, and measurably coloring each side with few purple edges.
	
	\smallskip

	\textbf{Suppose all components are finite.} Finite components that contain no vertices of degree 3 are even cycles, and can be properly edge-colored with red and blue, no purple edges needed. 
	
	We also claim that finite components that contain a degree 3 vertex contain at least two. Indeed, the sum of degrees has to be even. This implies, that the size of such a finite component is at least $r+1$, as degree 3 vertices are $r$-sparse. We claim that the density of degree 3 edges is at most $2/r$. Indeed, because of the sparsity the balls of radius $r/2$ around the degree 3 points are disjoint, and these balls contain at least $r/2$ points each. By Lemma \ref{lemma:finite_smart_coloring} we can color these finite components with only $2/r$ proportion of edges colored purple.
	
	As in the proof of Lemma \ref{lemma:finite_components_smart_coloring} we can measurably do this over all finite components of $\G$, and we will end up with at most $\rho$ purple edges in measure.
	
	\smallskip
	
	\textbf{Now assume that all components are infinite.} We simply apply Theorem \ref{theorem:CSLP_main}. We get a measurable edge coloring $c:E \to \{\mathrm{red}, \mathrm{blue}, \mathrm{purple}\}$. We will modify this coloring in finitely many steps so that purple edges become sparse. 
	
	Call a purple edge \emph{standard}, if both of its endpoints have degree 2. Our procedure will consist of $r+1$ phases, each phase consisting of finitely many recoloring steps. In phase 0 we will make sure that the neighbors of standard purple edges have different color. Standard purple edges that are between two blue edges can be recolored red. We will now argue that this recoloring maintains measurability.
	
	Fix a Borel 4-sparse labeling $l:X_1 \cup X_2 \to [k]$. Let $A=\{x \in X_1 \cup X_2 ~|~ B_{\G}(x,2) \cong \alpha\}$, where $\alpha$ is the rooted, edge-colored and vertex-labeled graph in Figure \ref{figure:neighborhood1}.
	
	\definecolor{qqqqff}{rgb}{0.,0.,1.}
	\definecolor{yqqqyq}{rgb}{0.5019607843137255,0.,0.5019607843137255}

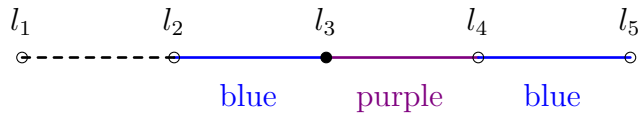
\begin{figure}[h]
	\centering
	\begin{tikzpicture}[line cap=round,line join=round,>=triangle 45,x=1.0cm,y=1.0cm]
	\clip(-1,-1) rectangle (9.,1.3);
	\draw [line width=1.pt,color=yqqqyq] (4.,0.)-- (6.,0.);
	\draw [line width=1.pt,color=qqqqff] (6.,0.)-- (8.,0.);
	\draw [line width=1.pt,color=qqqqff] (4.,0.)-- (2.,0.);
	\draw [line width=1.pt,dash pattern=on 3pt off 3pt] (2.,0.)-- (0.,0.);
	\draw (-0.3,0.8) node[anchor=north west] {$l_1$};
	\draw (1.7,0.8) node[anchor=north west] {$l_2$};
	\draw (3.7,0.8) node[anchor=north west] {$l_3$};
	\draw (5.7,0.8) node[anchor=north west] {$l_4$};
	\draw (7.7,0.8) node[anchor=north west] {$l_5$};
	\draw [color=qqqqff](2.45,-0.2) node[anchor=north west] {\textrm{blue}};
	\draw [color=yqqqyq](4.25,-0.2) node[anchor=north west] {\textrm{purple}};
	\draw [color=qqqqff](6.45,-0.2) node[anchor=north west] {\textrm{blue}};
	\begin{scriptsize}
	\draw [fill=black] (4.,0.) circle (2pt);
	\draw [color=black] (6.,0.) circle (2pt);
	\draw [color=black] (8.,0.) circle (2pt);
	\draw [color=black] (2.,0.) circle (2pt);
	\draw [color=black] (0.,0.) circle (2pt);
	\end{scriptsize}
	\end{tikzpicture}
	\caption{The rooted, edge-colored and vertex-labeled graph $\alpha$.}
	\label{figure:neighborhood1}	
\end{figure}

	 
	The vertex labels $l_1, \ldots, l_5$ and the color of the left most edge are not specified, but fix them for the time being. For any point $x \in A$ there exists a unique (standard) purple edge $e_a \in E$ starting at $a$. The vertex labeling is sparse so if $a, b \in A$ are distinct, then  $e_a$ and $e_b$ are also distinct. Since both the vertex labeling and the edge coloring are measurable, the set of edges $E_A=\{e_a ~|~ a \in A\}$  is measurable. We change the color of these to red.
	
	Now we repeat this procedure for all possible choices of $l_1, \ldots, l_5$ one after the other. We also consider neighborhoods where the vertex labeled by $l_2$ has degree 3, so the neighborhood is like in Figure \ref{figure:neighborhood2}.
	
	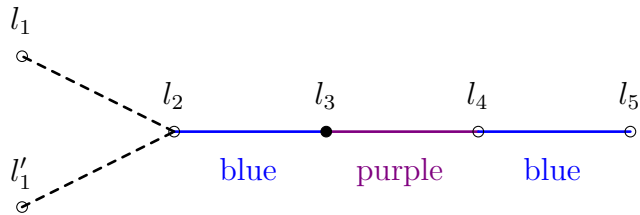
\begin{figure}[h]
		\centering
		\definecolor{qqqqff}{rgb}{0.,0.,1.}
		\definecolor{yqqqyq}{rgb}{0.5019607843137255,0.,0.5019607843137255}
		\begin{tikzpicture}[line cap=round,line join=round,>=triangle 45,x=1.0cm,y=1.0cm]
		\clip(-1.,-2.) rectangle (9.,2.3);
		\draw [line width=1.pt,color=yqqqyq] (4.,0.)-- (6.,0.);
		\draw [line width=1.pt,color=qqqqff] (6.,0.)-- (8.,0.);
		\draw [line width=1.pt,color=qqqqff] (4.,0.)-- (2.,0.);
		\draw [line width=1.pt,dash pattern=on 3pt off 3pt] (2.,0.)-- (0.,1.);
		\draw [line width=1.pt,dash pattern=on 3pt off 3pt] (0.,-1.)-- (2.,0.);
		\draw (-0.3,1.8) node[anchor=north west] {$l_1$};
		\draw (-0.3,-0.2) node[anchor=north west] {$l'_1$};
		\draw (1.7,0.8) node[anchor=north west] {$l_2$};
		\draw (3.7,0.8) node[anchor=north west] {$l_3$};
		\draw (5.7,0.8) node[anchor=north west] {$l_4$};
		\draw (7.7,0.8) node[anchor=north west] {$l_5$};
		\draw [color=qqqqff](2.45,-0.2) node[anchor=north west] {\textrm{blue}};
		\draw [color=yqqqyq](4.25,-0.2) node[anchor=north west] {\textrm{purple}};
		\draw [color=qqqqff](6.45,-0.2) node[anchor=north west] {\textrm{blue}};
		\begin{scriptsize}
		\draw [fill=black] (4.,0.) circle (2pt);
		\draw [color=black] (6.,0.) circle (2pt);
		\draw [color=black] (8.,0.) circle (2pt);
		\draw [color=black] (2.,0.) circle (2pt);
		\draw [color=black] (0.,1.) circle (2pt);
		\draw [color=black] (0.,-1.) circle (2pt);
		\end{scriptsize}
		\end{tikzpicture}
		\caption{Other possible neighborhoods of purple edges between blue edges.}
		\label{figure:neighborhood2}
	\end{figure}
	 
	After these finitely many recoloring steps there will be no standard purple edges between two blue edges. Endpoints of such purple edges can be identified by looking at their 2-neighborhoods, and the sparse labeling makes sure that we do not try to recolor an edge twice during the same step. By the same argument we can make sure that there are no standard purple edges between two red edges. This finishes phase 0.  
	
	Generally for $n\geq 1$, during phase $n$ we make sure that there are no standard purple edges at distance at most $n$. Since phases 1 through $n-1$ have already finished, a path between two purple edges at distance $n$ is colored by red and blue in an alternating way. Phase 0 has made sure that the neighbors of our purple edges are colored differently.
	
	\begin{figure}[h]
		\centering
		\definecolor{yqqqyq}{rgb}{0.5019607843137255,0.,0.5019607843137255}
		\definecolor{ffqqqq}{rgb}{1.,0.,0.}
		\definecolor{qqqqff}{rgb}{0.,0.,1.}
		\begin{tikzpicture}[line cap=round,line join=round,>=triangle 45,x=1.0cm,y=1.0cm]
		\clip(-0.5,-1.) rectangle (12.5,1.);
		\draw [line width=1.pt,color=qqqqff] (4.,0.)-- (6.,0.);
		\draw [line width=1.pt,color=ffqqqq] (6.,0.)-- (8.,0.);
		\draw [line width=1.pt,color=yqqqyq] (4.,0.)-- (2.,0.);
		\draw [line width=1.pt,color=ffqqqq] (2.,0.)-- (0.,0.);
		\draw [line width=1.pt,color=yqqqyq] (8.,0.)-- (10.,0.);
		\draw [line width=1.pt,color=qqqqff] (10.,0.)-- (12.,0.);
		
		\draw [color=qqqqff](4.6,-0.2) node[anchor=north west] {\textrm{blue}};
		\draw [color=yqqqyq](2.35,-0.2) node[anchor=north west] {\textrm{purple}};
		\draw [color=qqqqff](10.6,-0.2) node[anchor=north west] {\textrm{blue}};
		\draw [color=yqqqyq](8.35,-0.2) node[anchor=north west] {\textrm{purple}};
		\draw [color=ffqqqq](6.7,-0.2) node[anchor=north west] {\textrm{red}};
		\draw [color=ffqqqq](0.7,-0.2) node[anchor=north west] {\textrm{red}};
		\begin{scriptsize}
		\draw [color=black] (4.,0.) circle (2pt);
		\draw [color=black] (6.,0.) circle (2pt);
		\draw [color=black] (8.,0.) circle (2pt);
		\draw [fill=black] (2.,0.) circle (2pt);
		\draw [color=black] (0.,0.) circle (2pt);
		\draw [color=black] (10.,0.) circle (2pt);
		\draw [color=black] (12.,0.) circle (2pt);
		\end{scriptsize}
		\end{tikzpicture}
		\caption{Path between standard purple edges at distance 2.}
		\label{figure:local_picture_2}
	\end{figure}
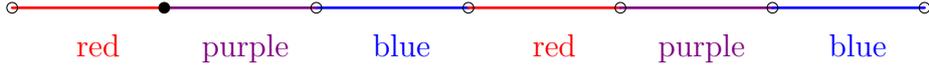 

	So swapping the colors on the path between the purple edges results in them having neighbors of the same color, and then we can recolor them red or blue according to the color of their neighbors. The case of $n=2$ is illustrated in Figures \ref{figure:local_picture_2} and \ref{figure:local_picture_better_2}.   
	
	\begin{figure}[h]
		\centering
		\definecolor{yqqqyq}{rgb}{0.5019607843137255,0.,0.5019607843137255}
		\definecolor{ffqqqq}{rgb}{1.,0.,0.}
		\definecolor{qqqqff}{rgb}{0.,0.,1.}
		\begin{tikzpicture}[line cap=round,line join=round,>=triangle 45,x=1.0cm,y=1.0cm]
		\clip(-0.5,-1.) rectangle (12.5,1.);
		\draw [line width=1.pt,color=ffqqqq] (0.,0.)-- (2.,0.);
		\draw [line width=1.pt,color=qqqqff] (2.,0.)-- (4.,0.);
		\draw [line width=1.pt,color=ffqqqq] (4.,0.)-- (6.,0.);
		\draw [line width=1.pt,color=qqqqff] (6.,0.)-- (8.,0.);
		\draw [line width=1.pt,color=ffqqqq] (8.,0.)-- (10.,0.);
		\draw [line width=1.pt,color=qqqqff] (10.,0.)-- (12.,0.);
		
		\draw [color=ffqqqq](0.7,-0.2) node[anchor=north west] {\textrm{red}};
		\draw [color=qqqqff](2.6,-0.2) node[anchor=north west] {\textrm{blue}};
		\draw [color=ffqqqq](4.7,-0.2) node[anchor=north west] {\textrm{red}};
		\draw [color=qqqqff](6.6,-0.2) node[anchor=north west] {\textrm{blue}};
		\draw [color=ffqqqq](8.7,-0.2) node[anchor=north west] {\textrm{red}};
		\draw [color=qqqqff](10.6,-0.2) node[anchor=north west] {\textrm{blue}};
		
		\begin{scriptsize}
		\draw [color=black] (4.,0.) circle (2pt);
		\draw [color=black] (6.,0.) circle (2pt);
		\draw [color=black] (8.,0.) circle (2pt);
		\draw [fill=black] (2.,0.) circle (2pt);
		\draw [color=black] (0.,0.) circle (2pt);
		\draw [color=black] (10.,0.) circle (2pt);
		\draw [color=black] (12.,0.) circle (2pt);
		\end{scriptsize}
		\end{tikzpicture}
		\caption{Path recolored.}
		\label{figure:local_picture_better_2}
	\end{figure}
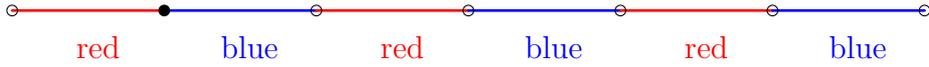  

	As before, we do not loose measurability. Standard purple edges at distance $n$ can be identified using neighborhoods of points, and a sufficiently sparse vertex labeling makes sure that we do not try to recolor the same edges with different colors. After finitely many recoloring steps we finish phase $n$. 
	
	We execute phases $0,1, \ldots, r$, and estimate the measure of purple edges at the end. Purple edges are either standard, or have an endpoint of degree 3. We divide the set $P = \{e \in E ~|~ c(e)=\textrm{"purple"}\}$ accordingly: $P_1=\{e \in P ~|~ e \textrm{ is standard}\}$, $P_2=\{ e=(x,y) \in P ~|~ \deg(x)=3 \textrm{ or } \deg(y)=3\}$.
	
	As points of degree 3 are $r$-sparse in infinite components they have measure at most $2/r$. Each such point has exactly one edge from $P_2$, so $\widetilde{\nu}(P_2) \leq 2/r$. Edges in $P_1$ are also $r$-sparse, so $\widetilde{\nu}(P_1) \leq 2/r$. We indeed proved that $\widetilde{\nu}(P) \leq 4/r$.	 
\end{proof}
	
\medskip	
	
\begin{proof}[Proof of Theorem \ref{theorem:almost_proper_coloring}]
	The idea is to use Theorem \ref{theorem:CSLP_induction} $(d-2)$ times to remove matchings from the edge set, coloring each one with a unique color. Whenever removing those matchings creates finite components we color those using Lemma \ref{lemma:finite_components_smart_coloring}. In the end we will apply Proposition \ref{proposition:induction_start} to the remaining graphing. 
	
	Instead of re-normalizing every time we split our graphings, we will talk about \emph{density} of $(d+1)$-color edges. This means their $\widetilde{\nu}$-measure divided by the $\nu$-measure of the vertex set of the graphing in question. At each step the density of $(d+1)$-color edges will be at most $\varepsilon$, so in the end their density will still be at most $\varepsilon$, and this coincides with their  $\widetilde{\nu}$-measure. 
	
	To be precise we start by picking $r_2 \geq r_0(3)$ such that $4/r_2 \leq \varepsilon$. Looking to apply Theorem \ref{theorem:CSLP_induction} repeatedly we define $r_i \geq r_1(i, r_{i-1})$ for $i=3, \ldots d$, also assuming $r_{i-1} \leq r_i$.
	
	The graphing $\G$ has no vertices of degree $(d+1)$, so in particular vertices of degree $(d+1)$ are $r_d$-sparse. The finite components can be colored with $[d]$ using Lemma \ref{lemma:finite_components_smart_coloring} (choosing $\rho =0$). So without loss of generality we can assume that all components are infinite. Then $\G$ satisfies the conditions of Theorem \ref{theorem:CSLP_induction}, so we find a measurable matching $M_1 \subseteq E(\G)$, such that the set of vertices of degree $d$ in $\G_1=\G \setminus M_1$ is $r_{d-1}$-sparse. We color $M_1$ with color $d$, and turn our attention to $\G_1$. Notice that all degrees in $\G_1$ are either $d$ or $d-1$.
	
	We claim that all finite components of $\G_1$ that contain a vertex of degree $d$ contain at least two. Assume towards contradiction that there was a finite component $G=(F_1,F_2, E')$, with a unique vertex $x$ of degree $d$. Without loss of generality we can assume $x \in F_1$. Double counting the edges we get $(d-1)|F_1|+1 = (d-1)|F_2|$, which is not possible if $d \geq 3$, as the right hand side is divisible by $(d-1)$, while the left hand side is not. This proves our claim. 
	
	This rules out small components with vertices of degree $d$. We use this to argue that in the finite components vertices of degree $d$ are not only sparse, but have small density as well. Components having degree $d$ vertices have size at least $r_{d-1}+1$, since they have at least two vertices at distance at least $r_{d-1}$. The $r_{d-1}/2$ balls around degree $d$ vertices are disjoint, and have size at least $r_{d-1}/2$ as the component has at least $r_{d-1}+1$ points. So the density of degree $d$ points is at most $2 / r_{d-1}$. 
	
	We split $\G_1$ into the disjoint union $\G_1 = \F_1 \cup \G'_1$, where $\F_1$ is the part with finite components, and $G'_1$ is the one with infinite components. To $\F_1$ we apply Lemma \ref{lemma:finite_components_smart_coloring} with $\rho = 2 / r_{d-1} \leq 4 / r_2 \leq \varepsilon$. We use the colors $\{1, \ldots, d-1\}$ for most edges, and $(d+1)$ with density at most $\varepsilon$.
	
	To $G'_1$ we apply Theorem \ref{theorem:CSLP_induction}, and find a measurable matching $M_2$ such that degree $(d-1)$ vertices in $G_2 = \G'_1 \setminus M_2$ are $r_{d-2}$-sparse. We color $M_2$ with color $d-1$. Notice that all vertices in $\G_2$ are of degree either $d-2$ or $d-1$.
	
	As long as $i<d-2$ we will split the graphing $\G_{i}$ into the finite and infinite component parts $\F_i$ and $G'_{i}$. We argue that degree $d-i+1$ vertices in $\F_i$ are not only sparse, but also of small density. The divisibility argument works, as we have $(d-i)|F_1|+1 = (d-i)|F_2|$ being impossible. We color $\F_i$ using Lemma \ref{lemma:finite_components_smart_coloring} with colors $\{1, \ldots, d-i\}$ and $(d+1)$ used with density at most $2/r_{d-i} \leq 4/r_2 \leq \varepsilon$. Then we apply Theorem \ref{theorem:CSLP_induction} to $\G'_i$, find a matching $M_{i+1}$, color it with $(d-i)$ and set $\G_{i+1}=\G_i \setminus M_{i+1}$. The set of vertices of degree $(d-i)$ in $\G_i$ are $r_{d-i}$-sparse.
	
	After coloring $d-2$ matchings we end up with $\G_{d-2}$. This satisfies the conditions of Proposition \ref{proposition:induction_start}, which finishes the coloring using colors $1,2$ and $(d+1)$, the later with density at most $4/r_2 \leq \varepsilon$. This finishes the proof. 
\end{proof}	

\section{Proof of the main Theorem} \label{section:proof_of_main}

In this section we complete the the proof of Theorem \ref{theorem:main}. Take any unimodular random rooted graph $(G,o)$ with distribution $\mu \in \mathcal{M}(\GoG^{2d}_{\circ})$ that is $2d$-regular with probability one. By Theorem \ref{theorem:balanced_orientation} we choose a graphing $\G=(X,E,\nu)$ representing $\mu$ that can be measurably oriented. The orientation is a measurable way of choosing a terminus of each edge, that is a symmetric measurable map $\textrm{or}: E \to X$ such that for all edges $e=(u,v) \in E$ we have $\textrm{or}(e) \in \{u,v\}$. 

Using the orientation we construct a $d$-regular bipartite graphing, by doubling the vertex set, and placing edges according to their orientation. To be precise let $Y=X \times \{1,2\}$, $X_i= X \times \{i\}$, consider the product measure $\nu'$ on $Y$, that is $\nu' = \nu \times u_{\{1,2\}}$, where $u_{\{1,2\}}$ is the uniform distribution on $\{1,2\}$. We build a bipartite, undirected graphing $\mathcal{H}$ on the space $(X_1, X_2, \nu')$. We set $\big((u,1),(v,2)\big) \in E(\mathcal{H})$ if and only if $(u,v) \in E(\G)$ and $\textrm{or}(u,v) = v$.

We set $1/n$ as the amount of error we allow with our coloring. By Theorem \ref{theorem:almost_proper_coloring} we have a measurable coloring $c^{n}_0:E(\mathcal{H}) \to [d+1]$ using the last color infrequently, i.e.\ $\widetilde{\nu}'\big(c_0^{-1}(d+1)\big)\leq 1/n$. The measure $\widetilde{\nu}'$ is the edge measure on the graphing $\mathcal{H}$. The coloring $c^n_0$ gives a coloring $c^n: E(\G) \to [d+1]$, as every oriented edge of $\G$ is represented as exactly one edge of $H$. For an edge $e=(u,v) \in E(\G)$ let $\textrm{or}^{-}(e)$ denote its source, that is $\textrm{or}^{-}(e)=\{u,v\} \setminus \textrm{or}(e)$. (For loops the terminus and source are the same.) We set $c^n(e) = c^n_0 \big((\textrm{or}^{-}(e),1),(\textrm{or}(e),2)\big)$.

We managed to measurable orient and color the edges of $\G$, although the number of colors is $(d+1)$ instead of the desired $d$. We now look at the connected component of a $\nu$-random point $x$ in $\G$, together with the orientation and coloring of edges, we get a random rooted, oriented and colored graph. Our next two lemmas claim that these are almost Schreier graphs and by passing to a weakly convergent subsequence we get an invariant random Schreier graph.

\begin{lemma} \label{lemma:almost_schreier}
	Let $(G_n,o_n,\mathrm{or}_n, c_n)$ denote the oriented, edge-colored component of a $\nu$-random vertex in $\G_n=(X, \nu, \mathrm{or}, c^n)$. Then
	
	\[\mathbb{P} \big[\textrm{all edges around the root } o_n \textrm{ are colored by } [d]\big]  \geq 1 - \frac{4}{n}.\]
\end{lemma}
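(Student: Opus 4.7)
The event whose probability we want to bound is that the root $o_n$ of $\G_n$ has some incident edge colored $(d+1)$. Let $P_n = (c^n)^{-1}(d+1) \subseteq E(\G)$ and let $P_n^0 \subseteq E(\mathcal{H})$ be the image of $P_n$ under the bijection $e \mapsto \bigl((\mathrm{or}^{-}(e),1),(\mathrm{or}(e),2)\bigr)$ between oriented edges of $\G$ and edges of $\mathcal{H}$. By construction, $\widetilde{\nu}'(P_n^0) = \widetilde{\nu}'\bigl(c_0^{n,-1}(d+1)\bigr) \leq 1/n$. The desired probability equals $\nu\bigl(\{x \in X : \deg_{P_n}(x) \geq 1\}\bigr)$, so it suffices to control the $\nu$-expected $P_n$-degree of the root.

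The first step is to translate the bound on $\widetilde{\nu}'(P_n^0)$ into a bound on $\widetilde{\nu}(P_n)$. The key observation is that by the definition of $\mathcal{H}$, for any $x \in X$ the degree $\deg_{P_n^0}((x,1))$ counts the edges of $P_n$ leaving $x$ under $\mathrm{or}$, while $\deg_{P_n^0}((x,2))$ counts those entering $x$; adding them gives the ordinary degree $\deg_{P_n}(x)$ in $\G$ (loops are correctly counted twice, since a loop at $x$ yields a single edge in $\mathcal{H}$ contributing $1$ on each side). Using $\nu' = \nu \times u_{\{1,2\}}$ then gives
\[
\widetilde{\nu}'(P_n^0) = \frac{1}{2}\int_Y \deg_{P_n^0}(y)\, d\nu'(y) = \frac{1}{4}\int_X \deg_{P_n}(x)\, d\nu(x) = \frac{1}{2}\widetilde{\nu}(P_n),
\]
so $\widetilde{\nu}(P_n) \leq 2/n$.

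The second step is a Markov-style inequality: the indicator $\mathbf{1}[\deg_{P_n}(x)\geq 1]$ is bounded above by $\deg_{P_n}(x)$ since the degree is a nonnegative integer. Integrating over $x$ yields
\[
\mathbb{P}\bigl[\text{some edge at }o_n\text{ is colored }(d{+}1)\bigr] \leq \int_X \deg_{P_n}(x)\, d\nu(x) = 2\widetilde{\nu}(P_n) \leq \frac{4}{n},
\]
which is the claimed bound. There is no real obstacle here beyond bookkeeping the two factors of $\tfrac{1}{2}$ that appear (one from doubling the vertex set, one from the definition of the edge measure $\widetilde{\nu}$); the essential content is the conservation of $P_n$-degree under the bipartite-doubling construction.
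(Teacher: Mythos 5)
Your proof is correct and is essentially the paper's argument in a slightly different dress: the paper bounds the $\nu$-measure of the set of vertices incident to a $(d+1)$-edge by tracking which of $(x,1),(x,2)$ lie in the corresponding bad set of $\mathcal{H}$, while you run the same bookkeeping of the two factors of $\tfrac12$ through expected degrees and a Markov inequality. Both yield the identical $4/n$ bound, so no further comment is needed.
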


\begin{proof}
	Let $Y_{d+1}\subseteq Y$ denote vertices of $\mathcal{H}$ that encounter an edge colored $d+1$. 
	\[Y_{d+1}=\{y \in X_1 \cup X_2 ~|~ \exists y'\sim_{\mathcal{H}} y \textrm{ with } c^n_0\big((y,y')\big)=d+1\}.\] 
	Our assumption of $(d+1)$-colored edges being infrequent implies $\nu(Y_{d+1}) \leq 2/n$.
	
	Similarly, set \[X_{d+1}=\{x \in X ~|~ \exists x'\sim_{\G} x \textrm{ with } c^n\big((x,x')\big)=d+1\}.\] By construction we have $x \in X_{d+1}$ if and only if $(x,1) \in Y_{d+1}$ or $(x,2) \in Y_{d+1}$. This gives $\nu(X_{d+1}) \leq 4/n$.
	
	The set $X \setminus X_{d+1}$ is exactly the vertices that have all edges connected to them colored by $[d]$, so we have	
	\[\mathbb{P} \big[\textrm{all edges around the root } o_n \textrm{ are colored by } [d]\big] = \nu(X \setminus X_{d+1}) \geq 1 - \frac{4}{n}.\]
\end{proof}

By passing to a subsequence we can assume that $(G_n,o_n,\textrm{or}_n,c_n)$ converges to some random rooted, oriented, colored graph. Note that for all $n$ the undirected, uncolored rooted graph $(G_n,o_n)$ is the same as $(G,o)$ in distribution, so the same holds for the limit. We will denote the limit by $(G,o,\textrm{or}, c)$, which highlights the fact that it is a random orientation and coloring of the random rooted graph $(G,o)$ we began with. 

\begin{remark}
	In our notation logically $\mu'_{\G_n}$ should denote the distribution of $(G_n,o_n,\textrm{or}_n,c_n)$, and $\mu'$ the subsequential weak limit. Notice however, that $\mu'_{\G_n} \notin \mathcal{M}(\GoG^{\mathrm{Sch}}_{\circ})$. The weak convergence $\mu'_{\G_n} \to \mu'$ takes place in the larger space $\mathcal{M}(\GoG^{2d,\mathrm{or},c}_{\circ})$. Here $\GoG^{2d,\mathrm{or},c}_{\circ}$ denotes the space of rooted, connected graphs with degree bound $2d$, with a balanced orientation and a coloring of the edges by $d+1$ colors, each vertex having at most one in- and outedge of each color. The forgetting function $\Phi: \GoG^{2d,\mathrm{or},c}_{\circ} \to \GoG^{2d}_{\circ}$ still makes sense, and our construction ensured $\Phi_{*}(\mu'_{(\G_n)}) = \mu$. This implies $\Phi_{*} \mu'=\mu$, which we shorthanded into the notation $(G,o,\textrm{or}, c)$.
	
	A priori we do not know that $\mu' \in \mathcal{M}(\GoG^{\mathrm{Sch}}_{\circ})$, and $F_d$ only partially acts on $\GoG^{2d,\mathrm{or},c}_{\circ}$. This makes the statement of our next lemma somewhat cumbersome.
\end{remark}

\begin{lemma} \label{lemma:invariant_schreier_limit}
	With probability 1 all the edges adjacent to the root in $(G,o,\mathrm{or}, c)$ are colored with $[d]$. For any $s \in [d]$ we define $s.o$ as the unique neighbor $v$ of $o$ with the edge $(o,v)$ colored $s$ and oriented towards $v$. Such a vertex exists almost surely.
	We claim that $(G,o,\mathrm{or},c)$ and $(G,s.o,\mathrm{or},c)$ are the same in distribution.  
\end{lemma}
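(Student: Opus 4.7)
The plan is to handle the two claims of the lemma separately: the first by an open-set/weak-convergence argument, and the second by transporting measure-preservation from the graphing $\G_n$ up to the space of rooted graphs.

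For the first claim, I observe that the event $U = \{$all edges at the root are colored by $[d]\}$ depends only on the $1$-neighborhood of the root, hence is clopen in $\GoG^{2d,\mathrm{or},c}_{\circ}$. Lemma \ref{lemma:almost_schreier} gives $\mu'_n(U) \geq 1 - 4/n$, and the portmanteau theorem applied to clopen sets yields $\mu'(U) = \lim_n \mu'_n(U) = 1$. Thus $s.o$ is well-defined almost surely.

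For the invariance, fix $s \in [d]$ and $n$. Because $c^n$ is a proper edge coloring, the $s$-colored edges of $\G_n$ form a Borel partial matching, yielding a Borel bijection $\phi_s^n : X^{n,+}_s \to X^{n,-}_s$ between the sets of vertices with outgoing (resp.\ incoming) $s$-edge. Restricting the graphing equation (\ref{eqn:graphing}) to the color-$s$ subgraphing and applying it with $A = (\phi_s^n)^{-1}(B)$ for arbitrary Borel $B \subseteq X$, I would deduce $\nu\bigl((\phi_s^n)^{-1}(B)\bigr) = \nu(B \cap X^{n,-}_s)$, i.e.\ that $\phi_s^n$ pushes $\nu\vert_{X^{n,+}_s}$ to $\nu\vert_{X^{n,-}_s}$. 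Both $X^{n,\pm}_s$ contain the set $Z_n$ of Lemma \ref{lemma:almost_schreier}, whose complement has $\nu$-measure at most $4/n$, so measure-preservation of $\phi_s^n$ together with this sparsity gives the total variation estimate
\[
\bigl\|\,\mathrm{law}(G_n, o_n, \mathrm{or}, c^n) - \mathrm{law}(G_n, s.o_n, \mathrm{or}, c^n)\,\bigr\|_{TV} = O(1/n).
\]

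To take the limit, let $T_s$ be any Borel extension of the partial map on $\GoG^{2d,\mathrm{or},c}_{\circ}$ that shifts the root along the outgoing $s$-edge. Then $T_s$ is continuous on $U$ (the shifted $1$-neighborhood of the new root is determined by the original $2$-neighborhood), and $\mu'(U) = 1$ by the first claim, so by the continuous mapping theorem $\mu'_n \circ T_s^{-1} \to \mu' \circ T_s^{-1}$ weakly. Combined with $\mu'_n \to \mu'$ and the total variation bound above, this gives $\mu' = \mu' \circ T_s^{-1}$, which is the desired distributional equality. The main technical obstacle I anticipate is verifying the measure-preservation of $\phi_s^n$ cleanly from the graphing axiom while tracking the slight mismatch between its domain and codomain; once that is in place, the clopen nature of $U$ and the $O(1/n)$ loss make the weak-limit step routine.
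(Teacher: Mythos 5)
Your argument is correct and essentially the one in the paper: both parts rest on the clopen/continuity structure of the cylinder events together with the $\nu$-preservation of the partial bijection that follows the outgoing $s$-edge in $\G_n$ and the fact that its domain misses only $O(1/n)$ of the space, and your continuous-mapping-theorem packaging of the limit step is just a global restatement of the paper's cylinder-by-cylinder computation with the set $\mathbf{B}$. One small caution: plugging $A=(\phi^n_s)^{-1}(B)$ directly into (\ref{eqn:graphing}) for the color-$s$ subgraph does not immediately yield $\nu\big((\phi^n_s)^{-1}(B)\big)=\nu(B\cap X^{n,-}_s)$, since $\deg_s(x,B)$ counts the incoming $s$-edge at $x$ as well as the outgoing one; instead invoke the standard fact (also used without further proof in the paper) that any Borel partial injection whose graph is contained in $E(\G)$ preserves $\nu$.
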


In the lemma we only claim that edges at the root have the right colors. Together with the invariance however this implies that all edges have the right colors, so Lemma \ref{lemma:invariant_schreier_limit} finishes the proof of Theorem \ref{theorem:main}.

\medskip

\begin{proof}
	The first part of the statement follows easily from Lemma \ref{lemma:almost_schreier}. The probability of seeing the colors $[d]$ adjacent to $o$ in $(G,o,\textrm{or}, c)$ is the limit of the probabilities of the same event for $(G_n,o_n,\textrm{or}_n,c_n)$, which is clearly $1$.
	
	To show invariance with respect to moving the root, let $\alpha$ denote a finite (connected) rooted, oriented, $[d+1]$ colored graph of radius $r$. Assume also that the $2d$ edges at the root are properly oriented an colored.  
	
	We have to show \[\mathbb{P}_{(G,o,\mathrm{or},c)}\big[B_G(r,o) \cong \alpha\big] = \mathbb{P}_{(G,s.o,\mathrm{or},c)}\big[B_G(r,s.o) \cong \alpha\big].\]
	
	The event $B_G(r,s.o) \cong \alpha$ can be seen by the $r+1$ neighborhood of the root in $(G,o)$. So we collect all connected, rooted, oriented and $[d+1]$-colored finite graphs $\beta=(F_{\beta},o_{\beta},\mathrm{or}_{\beta}, c_{\beta})$ of radius at most $r+1$, where $B_{F_{\beta}}(r,s.o_{\beta}) \cong \alpha$. We implicitly assumed that $s.o_{\beta}$ makes sense. Let $\mathbf{B}$ denote the set of all such $\beta$. Now we can express the right hand side of the above as
	
	\[\mathbb{P}_{(G,s.o,\mathrm{or},c)}\big[B_G(r,s.o) \cong \alpha\big] = \mathbb{P}_{(G,o,\mathrm{or},c)}\big[B_G(r+1,o) \in \mathbf{B}\big].\]
	
	Since these events correspond to clopen sets in the space of rooted, oriented, colored graphs we have
	
	\[\mathbb{P}_{(G,o,\mathrm{or},c)}\big[B_G(r,o) \cong \alpha\big] = \lim_{n \to \infty} \mathbb{P}_{(G_n,o_n,\mathrm{or}_n,c_n)}\big[B_{G_n}(r,o_n) \cong \alpha\big],\]
	\[\mathbb{P}_{(G,o,\mathrm{or},c)}\big[B_G(r+1,o) \in \mathbf{B}\big] =
	\lim_{n \to \infty}
	\mathbb{P}_{(G_n,o_n,\mathrm{or}_n,c_n)}\big[B_{G_n}(r+1,o_n) \in \mathbf{B}\big].\]
	
	To compare the events $B_{G_n}(r,o_n) \cong \alpha$ and $B_{G_n}(r+1,o_n) \in \mathbf{B}$ we introduce the sets 
	
	\[X^{\alpha}_{n}=\{x \in X ~|~ B_{(\G,c^n)}(r,x) \cong \alpha\},\]
	\[X^s_n:\{x \in X ~|~ \textrm{ there is an outward $s$-edge of $(\G,\mathrm{or},c^n)$ at } x\},\]
	\[X^{\mathbf{B}}_{n}=\{x \in X ~|~ B_{(\G,c^n)}(r+1,x) \in \mathbf{B}\}.\]

	We also introduce the partial bijection $\varphi^s_n$ defined on $X^s_n$, setting $\varphi^s_n(x) = y$ if $(x,y)$ is the $s$-colored edge of $(\G,\mathrm{or},c^n)$ oriented towards $y$. As $\G$ is a graphing, this partial bijection is measure preserving from $X^s_n$ to $\varphi^s_n(X^s_n)$. Also $X^{\mathbf{B}}_n = \{x \in X^s_n ~|~ \varphi^s_n(x) \in X^{\alpha}_n\}$, so
	
	\[\mu(X^{\mathbf{B}}_n) = \mu\big((\varphi^s_n)^{-1}(\mathrm{ran}\varphi^s_n \cap X^{\alpha}_n  \big) = \mu(\mathrm{ran}\varphi^s_n \cap X^{\alpha}_n).\]
	We can rewrite the probabilities as follows:
	
	\[\mathbb{P}_{(G_n,o_n,\mathrm{or}_n,c_n)}\big[B_{G_n}(r,o_n) \cong \alpha\big] = \mu(X^{\alpha}_n),\]
	\[\mathbb{P}_{(G_n,o_n,\mathrm{or}_n,c_n)}\big[B_{G_n}(r+1,o_n) \in \mathbf{B}\big] = \mu(X^{\mathbf{B}}_n) = \mu(\mathrm{ran}\varphi^s_n \cap X^{\alpha}_n).\]
	We have $\mu(X^{\alpha}_n) - \mu(\mathrm{ran}\varphi^s_n \cap X^{\alpha}_n) \leq 1- \mu(\mathrm{ran}\varphi^s_n) = 1- \mu(X^s_n) \to 0$, so
	
	\[\lim_{n \to \infty} \mathbb{P}_{(G_n,o_n,\mathrm{or}_n,c_n)}\big[B_{G_n}(r,o_n) \cong \alpha\big] = \lim_{n \to \infty}
	\mathbb{P}_{(G_n,o_n,\mathrm{or}_n,c_n)}\big[B_{G_n}(r+1,o_n) \in \mathbf{B}\big].\]
	By the equalities above this implies
	\[\mathbb{P}_{(G,o,\mathrm{or},c)}\big[B_G(r,o) \cong \alpha\big] = \mathbb{P}_{(G,o,\mathrm{or},c)}\big[B_G(r+1,o) \in \mathbf{B}\big] = \mathbb{P}_{(G,s.o,\mathrm{or},c)}\big[B_G(r,s.o) \cong \alpha\big],\]
	which finishes the proof.
\end{proof}

\section{Graphings from $F_d$ actions} \label{section:graphings}

In this section we consider probability measure preserving (p.m.p.) actions of $F_d$, and prove Corollary \ref{corollary:graphing_lift}.

Given a p.m.p.\ action of $F_d$ on a standard Borel probability space $(X, \nu)$ the associated Schreier graph $\Sch(F_d \acts X, S)$ with the standard generating set $S$ is a $2d$-regular graphing. The action of $F_d$ provides a measurable orientation and coloring of the edges. 

Not all $2d$-regular graphings come from p.m.p. actions of $F_d$. In fact finding a measurable orientation and coloring for the edges of a $2d$-regular graphing $\G=(X,E, \nu)$ is equivalent to finding such a p.m.p.\ action $F_d \acts (X, \nu)$ with $\Sch(F_d \acts X, S)=\G$ as unlabeled graphings.

When two graphings $\G_1$ and $\G_2$ represent the same unimodular random rooted graph, that is $\mu_{\G_1} = \mu_{\G_2}$ we say that they are \emph{locally equivalent}. Recall that a graphing $\G_1=(X_1, E_1, \nu_1)$ is the \emph{local isomorphic image} of another graphing $\G_2=(X_2, E_2, \nu_2)$ if their is a measure preserving map $\varphi: X_2 \to X_1$ such that $\varphi_{*}\nu_2=\nu_1$, and  $\big(C_{\G_2}(x),x\big) \cong \big(C_{\G_1}(\varphi(x)),\varphi(x)\big)$ for $\nu_2$-almost all $x \in X_2$.
It is clear that if $\G_1$ is a local isomorphic image of $\G_2$, then they are locally equivalent. However, local isomorphisms are not invertible in general. To this end we introduce a symmetric relation, \emph{bi-local isomorphism}. We say $\G_1$ and $\G_2$ are bi-locally isomorphic if they are both local isomorphic images of some graphing $\G_3$.

Clearly, bi-local isomorphism implies local equivalence. We will exploit the fact that the converse also holds. 

\begin{theorem}\label{theorem:bilocal}
	Two graphings are locally equivalent if and only if they are bi-locally isomorphic.
\end{theorem}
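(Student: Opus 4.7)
The forward implication is immediate: if $\G_3$ has local isomorphisms onto both $\G_1$ and $\G_2$, then, as already observed in the paper when local isomorphic image was defined, $\G_3$ represents the same unimodular random rooted graph as each $\G_i$, whence $\mu_{\G_1}=\mu_{\G_2}$.

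For the converse, let $\mu=\mu_{\G_1}=\mu_{\G_2}$ and let $\G_\mu$ be the canonical representing graphing, built in the spirit of Subsection 2.4 and \cite[Thm.\ 18.37]{lovasz2012large}, on the standard Borel space of $[0,1]$-vertex-labeled, connected, rooted graphs with degrees bounded by $2d$; its measure $\nu_\mu$ arises by sampling $(G,o)\sim\mu$ and placing i.i.d.\ uniform $[0,1]$ labels on $V(G)$, and its edges connect labeled rooted graphs that differ only by a one-step root move. The plan is to prove that for every graphing $\G$ representing $\mu$ there is a local isomorphism $\G_\mu\to\G$; running this with $\G=\G_1$ and $\G=\G_2$ then exhibits $\G_\mu$ as a common preimage, establishing bi-local isomorphism.

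Given such a $\G=(X,E,\nu)$, I would first form the enriched graphing $\G^+$ by attaching i.i.d.\ uniform $[0,1]$ labels to its vertices: take the product vertex space $X\times\Omega$ with product measure (where $\Omega$ is an auxiliary Borel probability space carrying the label field), and declare $(x,\omega)\sim(x',\omega)$ an edge iff $(x,x')\in E$. A short Fubini argument using that $\G$ is a graphing shows that $\G^+$ is again a graphing. The forgetful projection $\pi\colon\G^+\to\G$ is obviously a local isomorphism. More interestingly, the Borel map $\varphi\colon(x,\omega)\mapsto(C_\G(x),x,\omega|_{C_\G(x)})$ into the space underlying $\G_\mu$ is also a local isomorphism: on each component of $\G^+$ it is a rooted graph isomorphism onto the corresponding component of $\G_\mu$, and by the very definition of $\nu_\mu$ it pushes the measure of $\G^+$ forward to $\nu_\mu$.

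The crucial additional fact is that $\varphi$ is almost surely injective, hence a measure-theoretic isomorphism $\G^+\cong\G_\mu$. Indeed, each realization of $\mu$ has countably many vertices, so with probability one its i.i.d.\ continuous labels are pairwise distinct; this kills every non-trivial rooted automorphism of a labeled realization and ensures that points of $\G^+$ from different components have different images. Consequently $\varphi$ admits an almost-everywhere defined, measure-preserving Borel inverse $\varphi^{-1}$, and $\pi\circ\varphi^{-1}\colon\G_\mu\to\G$ is the required local isomorphism. The main technical obstacle will be setting up the enrichment $\G^+$ rigorously and certifying the measure-theoretic bijectivity of $\varphi$; this uses standard but careful Borel selection and symmetry-breaking by i.i.d.\ labels, in the same vein as the canonical graphing construction recalled in Subsection 2.4.
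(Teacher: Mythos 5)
The forward implication is fine, and so is the scaffolding of your converse: enriching a representing graphing $\G$ with i.i.d.\ labels does yield a graphing $\G^{+}$ together with two local isomorphisms, the projection $\pi\colon\G^{+}\to\G$ and the map $\varphi\colon\G^{+}\to\G_{\mu}$ sending a point to the isomorphism class of its labeled rooted component. The fatal step is the claim that $\varphi$ is almost surely injective. Distinct labels inside a component only kill automorphisms, i.e.\ they make $\varphi$ injective \emph{on each component}; and the observation that two independently sampled points are a.s.\ not identified is a statement about the product measure on $\G^{+}\times\G^{+}$, which never implies essential injectivity (a two-to-one map already has a null set of coinciding pairs in the product). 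In fact $\varphi$ remembers only the isomorphism class of the labeled component and forgets which component of $\G$ it came from, so whenever distinct components of $\G$ are isomorphic as unlabeled graphs, $\varphi$ collapses uncountable families of fibers. Take $d=1$, $\mu=\delta_{\Z}$ and $\G$ the Schreier graphing of the Bernoulli shift on $[0,1]^{\Z}$: in the fibered construction of $\G^{+}$ the image $\varphi(x,f)$ depends only on the label sequence $f$ read along the orbit of $x$ (up to the flip), not on $x$, so $\varphi$ is uncountable-to-one everywhere.

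Worse, the intermediate goal itself is false: $\G_{\mu}$ is \emph{not} a universal common cover of all representing graphings. For $\mu=\delta_{\Z}$ the canonical graphing $\G_{\mu}$ is $[0,1]^{\Z}$ modulo the coordinate flip $\sigma$, and it admits no measurable balanced orientation: an orientation lifts to a shift-invariant sign function $h$ on $[0,1]^{\Z}$ satisfying $h\circ\sigma=-h$, contradicting ergodicity of the shift (this is the same phenomenon as Laczkovich's example recalled in the Introduction). A measurable balanced orientation of the image of a local isomorphism pulls back to the source, so $\G_{\mu}$ cannot be mapped local-isomorphically onto any orientable representing graphing, e.g.\ the Bernoulli shift graphing or an irrational rotation graphing. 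What your construction genuinely proves is that each $\G_{i}$ is bi-locally isomorphic to $\G_{\mu}$ via $\G_{i}^{+}$; the remaining content of the theorem is exactly the transitivity of this relation, which cannot be shortcut by identifying $\G_{i}^{+}$ with $\G_{\mu}$. The standard argument, which is what the paper defers to in \cite[Theorem 18.59]{lovasz2012large}, instead builds the common cover from both graphings at once, essentially as a fiber product (relatively independent joining) of $\G_{1}^{+}$ and $\G_{2}^{+}$ over their common image $\G_{\mu}$.
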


See \cite[Theorem 18.59]{lovasz2012large} for a detailed proof. Most of the argument is also present in \cite{hatami2014limits}.

\medskip

\begin{proof}[Proof of Corollary \ref{corollary:graphing_lift}]
	Let $\G$ be an arbitrary $2d$-regular graphing. By Theorem \ref{theorem:main} the corresponding unimodular random rooted graph $\mu_{\G}$ has an invariant random Schreier decoration $\mu'$. We use $\mu'$ to build a graphing that is locally equivalent to $\G$ with measurable orientation and coloring of the edges. 
	
	Let $\GoG_{\circ}^{\Sch,l}$ denote the space of Schreier graphs of $F_d$ with vertices labeled by $[0,1]$. The Borel structure and the edges are define as in Subsection \ref{subsection:constructing_graphings}. By choosing uniform i.i.d.\ vertex labels for a $\mu'$-random Schreier graph, we get a measure $\bar{\mu}$ on $\GoG_{\circ}^{\Sch,l}$ that turns it into a graphing locally equivalent to $\G$. The graphing property holds because $\mu'$ is invariant, and $(\GoG_{\circ}^{\Sch,l}, \bar{\mu})$ is locally equivalent to $\G$ because $\Phi_{*}\bar{\mu}=\Phi_{*}\mu'=\mu_{\G}$. It is clear that $(\GoG_{\circ}^{\Sch,l}, \bar{\mu})$ has a measurable orientation and coloring, as by construction it carries a p.m.p.\ action of $F_d$.
	
	By Theorem \ref{theorem:bilocal} $\G$ and $(\GoG_{\circ}^{\Sch,l}, \bar{\mu})$ are bi-locally isomorphic, so they are both local isomorphic images of some graphing  $\G'=(X', E', \nu')$. The measurable orientation and coloring can be pulled back by the local isomorphism $\varphi: X' \to \GoG_{\circ}^{\Sch,l}$, so $\G'$ satisfies the requirements of the corollary.
\end{proof}

\subsection{Unimodular decorations}

The argument in the previous subsection can be generalized to arbitrary decorations of graphs. To define an abstract decoration, let $D_1, \ldots D_n$ be compact standard Borel spaces serving as labels. This allows finite sets as well. We use $D_i$ to label $k_i$-tuples of vertices.

A \emph{decoration} of the graph $G=(V,E)$ is the finite set of functions $\mathcal{C}=\{c_1,\ldots, c_n\}$ with $c_i:V(G)^{k_i} \to D_i$. The decorated graph consists of the pair $(G,\mathcal{C})$.

Fix the sets of labels $\mathcal{D}= (D_1, \dots, D_n)$, the size of tuples $\mathbf{k}=(k_1, \ldots, k_n)$ and a degree bound $\Delta$. The space $\GoG_{\circ}^{\Delta, \mathcal{D}, k}$ of rooted, connected, decorated graphs $(G,o,\mathcal{C})$ with degree bound by $\Delta$ is a compact standard Borel space as before. 

The Borel structure is generated by the following cylinder sets. For any $r \geq 0$ we fix the isomorphism type of the ball with radius $r$ about the root, and also for every $k_i$-tuple of vertices in the ball we specify a Borel set in $D_i$ from which its label is to be chosen.

Unimodular measures $\mu \in \mathcal{M}(\GoG_{\circ}^{\Delta, \mathcal{D}, k})$ can be defined exactly as before, by the involution invariance of the bi-rooted measure $\widetilde{\mu} \in \mathcal{M}(\GoG_{\circ\circ}^{\Delta, \mathcal{D}, k})$ obtained by taking a step with the simple random walk on a $\mu$-random sample of $\GoG_{\circ}^{\Delta, \mathcal{D}, k}$.

\begin{definition} \label{definition:unimodular_decoration}
	Let $\mu \in \GoG_{\circ}^{\Delta}$ be a unimodular random rooted graph. A \emph{unimodular random decoration} of $\mu$ is a measure $\mu' \in \mathcal{M}(\GoG_{\circ}^{\Delta, \mathcal{D}, k})$ that is unimodular, and $\Phi_{*}\mu' = \mu$. 
\end{definition}

This notion has been studied in the context of Schreier decorations by Biringer and Tamuz in \cite{biringer2017unimodularity}. They show that for a random rooted Schreier graph unimodularity is equivalent to invariance. Cannizzo also proves this in \cite{cannizzo2013invariant}. The equivalence shows how Definition \ref{definition:unimodular_decoration} is a generalization of Definition \ref{definition:invariant_schreierization} in the Introduction.

When decorating a graphing $\G$ the only formal difference is requiring measurability of the maps $c_i:V(\G)^{k_i} \to D_i$. Each connected component becomes a decorated graph when restricting the decoration. 

Since we are only interested in the connected components, the values of the $c_i$ only matter on $k_i$-tuples where the vertices are in the same component. As the graph is measurable, modifying the values on tuples that are not contained in connected components does not change measurability of the $c_i$. So we can assume the $c_i$ to be constant on such tuples. 

The following theorem translates between the two languages.

\begin{theorem} \label{theorem:translation}
	Let $P$ be a property of rooted graphs, and $Q$ be a property of rooted, decorated graphs. The following are equivalent.
	\begin{enumerate}[i)]
		\item Every unimodular random rooted graph $\mu$ that has property $P$ almost surely has a unimodular random decoration $\mu'$ that has property $Q$ almost surely.
		\item Every graphing $\G$ with almost every component having property $P$ is the local isomorphic image of a graphing $\G'$ that has a measurable decoration with almost every component having property $Q$. 
	\end{enumerate}
\end{theorem}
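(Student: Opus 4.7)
The plan is to prove the two implications separately, closely following the template of the proof of Corollary \ref{corollary:graphing_lift} in the previous subsection.

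For the implication $(ii) \Rightarrow (i)$: given a unimodular random rooted graph $\mu$ with property $P$ almost surely, I would invoke Theorem \ref{theorem:representing_graphing} to obtain a graphing $\G$ representing $\mu$. Then almost every component of $\G$ has property $P$, so $(ii)$ yields a graphing $\G' = (X', E', \nu')$ with a measurable decoration $\mathcal{C}'$ such that $\G$ is the local isomorphic image of $\G'$ and almost every decorated component of $(\G', \mathcal{C}')$ has property $Q$. Define $\mu'$ as the distribution of the rooted decorated component of a $\nu'$-random vertex in $\G'$. Unimodularity of $\mu'$ is inherited from the measure-preserving property \eqref{eqn:graphing} for $\G'$, which passes to the decorated setting because the decoration is fixed along every edge rather than randomized; the identity $\Phi_{*}\mu' = \mu$ follows from local equivalence of $\G$ and $\G'$, so that $\mu_{\G'} = \mu_{\G} = \mu$ after forgetting the decoration.

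For the implication $(i) \Rightarrow (ii)$: starting from a graphing $\G$ whose components almost surely have property $P$, set $\mu = \mu_{\G}$ and apply $(i)$ to obtain a unimodular random decoration $\mu'$ of $\mu$ with property $Q$ almost surely. Following Subsection \ref{subsection:constructing_graphings}, couple $\mu'$ with i.i.d.\ $[0,1]$ vertex labels to break almost all symmetries, producing a measure $\bar{\mu}$ on the space $\GoG_{\circ}^{\Delta, \mathcal{D}, \mathbf{k}, l}$ of rooted, connected, decorated and vertex-labeled graphs. Unimodularity of $\mu'$ together with the i.i.d.\ labels ensures that $\mathcal{H} = (\GoG_{\circ}^{\Delta, \mathcal{D}, \mathbf{k}, l}, \bar{\mu})$ is a genuine graphing representing $\mu$, and it carries a tautological measurable decoration inherited from its vertex set, with almost every decorated component satisfying $Q$. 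Now $\G$ and $\mathcal{H}$ are locally equivalent, so Theorem \ref{theorem:bilocal} produces a common graphing $\G'$ of which both are local isomorphic images. Pulling the decoration back along the local isomorphism $\varphi : \G' \to \mathcal{H}$, by setting $c_i^{\G'}(x_1, \ldots, x_{k_i}) = c_i^{\mathcal{H}}(\varphi(x_1), \ldots, \varphi(x_{k_i}))$ on tuples lying in a single component of $\G'$ and extending by a constant elsewhere, gives a measurable decoration of $\G'$ under which each component is decoration-isomorphic to the corresponding component of $\mathcal{H}$, and hence has property $Q$.

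The main obstacle is bookkeeping rather than a deep mathematical difficulty: one has to set up the space $\GoG_{\circ}^{\Delta, \mathcal{D}, \mathbf{k}, l}$ carefully and verify that the canonical construction of $\mathcal{H}$ really does yield a graphing whose associated unimodular random decorated graph is $\mu'$, which is the decorated analog of Proposition \ref{proposition:orientable_graphing}. Once $\mathcal{H}$ is in place the appeal to Theorem \ref{theorem:bilocal} packages the rest of the argument neatly, with the measurability of the pulled-back decoration on $\G'$ following from the measurability of $\varphi$ and of the tautological decoration on $\mathcal{H}$.
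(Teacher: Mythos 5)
Your proposal is correct and follows essentially the same route as the paper: the $(ii)\Rightarrow(i)$ direction by choosing a representing graphing and reading off $\mu'$ from the decorated components, and the $(i)\Rightarrow(ii)$ direction by building the canonically decorated, vertex-labeled graphing as in Subsection \ref{subsection:constructing_graphings} and then invoking Theorem \ref{theorem:bilocal} to pull the decoration back to a common local-isomorphic preimage. The paper's proof is just a terser version of the same argument, deferring the details to the proof of Corollary \ref{corollary:graphing_lift}.
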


\begin{proof}
	The $ii)$ implies $i)$ part is the easy one. We pick $\G$ to represent $\mu$, then $\G'$ also represents $\mu$. Together with the measurable decoration it defines $\mu'=\mu'_{(\G,\mathcal{C})}$.
	
	The $i)$ implies $ii)$ part is proved exactly like Corollary \ref{corollary:graphing_lift}. Starting from $\G$ we use $i)$ for $\mu_{\G}$ to find a locally equivalent graphing $\G_0$ that can be measurable decorated, and use Theorem \ref{theorem:bilocal} to find $\G'$ that has local isomorphisms into both. The decoration can be pulled back from $\G_0$ to $\G'$.
\end{proof}

\medskip

We now proceed to prove Corollary \ref{corollary:proper_coloring_lift}.

\medskip

\begin{proof}[Proof of Corollary \ref{corollary:proper_coloring_lift}]
	Our idea is to show that the unimodular random graph $\mu_{\G}$ has a unimodular random proper edge coloring with $d$ colors, and then use the correspondence in Theorem \ref{theorem:translation}.
	
	First we complement our bipartite graphing $\G$ with additional vertices and edges, such that it becomes $d$-regular. Then we color the edges using \ref{theorem:almost_proper_coloring} with smaller and smaller weight of $(d+1)$-color edges. The coloring $c_n$ restricted to $\G$ give unimodular random colored graphs $\mu_{(\G,c_n)}$ when looking at the component of a random point. A subsequential weak limit will be a unimodular random edge coloring of $\mu_{\G}$.
	
	We also want to make sure that the graphing we get in the end is bipartite. This is equivalent to the existence of a measurable proper vertex labeling with 2 labels. So we start with such a labeling $l: V(G) \to \{1,2\}$, and retain the information in the random graph, that is we consider $\mu_{(\G,c_n, l)}$. Now a subsequential weak limit will still be unimodular random, carrying both edge and vertex decoration. This makes sure that the graphing $G'$ that has $G$ as a local isomorphic image is both bipartite and the edges can be measurably colored. 
\end{proof}

\bibliographystyle{alpha}
\bibliography{refs}

\end{document}